\documentclass[twoside,11pt]{article}

\usepackage{blindtext}
\usepackage{comment}

\usepackage[preprint]{jmlr2e}

\usepackage{optidef}
\usepackage{xcolor}

\usepackage{mathrsfs, amsmath, amssymb, mathtools, amsfonts}
\DeclareMathSymbol{\shortminus}{\mathbin}{AMSa}{"39}
\newcommand{\T}{^{\!\top}\!}

\DeclareMathAlphabet{\mymathbb}{U}{bbold}{m}{n}

\newcommand{\eye}{\mathbb{I}}

\newcommand{\inv}{^{\shortminus 1}}

\newcommand{\U}{\mathcal{U}}

\newcommand{\Z}{\mathcal{Z}}

\newcommand{\thetaSet}{\Theta}

\newcommand{\llsobjective}{\psi}

\newcommand{\ntheta}{n_{\theta}}

\newcommand{\NsmallerN}{\mathbb{N}_{< N}}

\newcommand{\problem}{\mathcal{P}}

\newcommand{\expected}[2]{\mathbb{E}_{#1}\left[#2\right]}

\DeclareMathOperator{\Prop}{\mathbb{P}}

\DeclareMathAlphabet{\mymathbb}{U}{bbold}{m}{n}

\newcommand{\bigO}{\mathcal{O}}

\newcommand{\R}{\mathbb{R}}

\newcommand{\thetatrue}{\theta^\star}

\newcommand{\info}{\mathcal{I}}

\newcommand{\uopttrue}{u^\star}

\newcommand{\objective}{\varphi}

\DeclareMathOperator{\trace}{tr}

\usepackage{lastpage}
\jmlrheading{??}{2025}{1-\pageref{LastPage}}{3/25; Revised ??/25}{??/25}{21-0000}{Katrin Baumgärtner, L\'eo Simpson, and Moritz Diehl}

\ShortHeadings{}{Baumgärtner, Simpson, Diehl}
\firstpageno{1}

\begin{document}

\title{Gray-Box Optimization using Optimism in the Face of Uncertainty}

\author{\name Katrin Baumgärtner \email katrin.baumgaertner@imtek.uni-freiburg.de\\
       \addr Department of Microsystems Engineering \\
       University of Freiburg\\
       Germany
       \AND
       \name L\'eo Simpson \email leo.simpson@imtek.uni-freiburg.de\\
       \addr Department of Microsystems Engineering and Department of Mathematics\\
       University of Freiburg\\
       Germany
       \AND
       \name Moritz Diehl \email moritz.diehl@imtek.uni-freiburg.de\\
       \addr Department of Microsystems Engineering and Department of Mathematics\\
       University of Freiburg\\
       Germany}

\editor{My editor}

\maketitle

\begin{abstract}%
This paper considers sequential gray-box optimization where the objective function is given as the composition of a loss function and a parametric model.
Crucially, the parameters of the model are unknown and need to be iteratively estimated from noisy observations of the model outputs.
This problem setup generalizes the parametric black-box optimization problem known as (contextual) stochastic linear bandit.
To address the sequential gray-box optimization problem, we propose a structure-exploiting method that leverages known problem structure given in terms of the loss function and an a priori set of admissible parameters.
The method is based on the principle of \textit{optimism in the face of uncertainty} and trades off exploration and exploitation by minimizing a lower confidence bound on the true objective function.
We provide a detailed regret analysis of the novel method, improving on state-of-the-art results for the special case of linear stochastic bandits due to the use of a recently published bound for the parameter confidence sets arising in multi-output linear least-squares estimation.
Numerical examples illustrate the superior performance of structure-exploiting methods compared to structure-agnostic approaches.
\end{abstract}

\begin{keywords}
gray-box optimization, optimism in the face of uncertainty, linear stochastic bandit, contextual bandit, regret bound
\end{keywords}

\section{Introduction}

In many real-world decision-making problems, optimization must be carried out under limited information and uncertainty.
A common modeling paradigm in this context is black-box optimization where the objective function is treated as an unknown mapping from actions to the objective value \citep{Mockus1978, Brochu2010}.
While this abstraction enables broad applicability \citep{Falkner2018, Berkenkamp2016, Berkenkamp2023, Kudva2024}, it often ignores valuable prior knowledge about how the objective is generated.
As a result, black-box methods may require substantial data to achieve satisfactory performance, which might limit their performance in settings where decisions need to be made online.

In contrast, many applications -- for instance in operations research, engineering, control, or robotics -- naturally exhibit partial structure that can be explicitly modeled:
The objective function often arises as the composition of a known loss function with an underlying parametric model.
Although the parameters governing this model are unknown, the functional relationship between unknown parameters, actions, and observed outputs is often well understood.
This setting motivates the study of gray-box optimization, which lies between the extremes of fully specified (white-box) and completely unknown (black-box) models.
By incorporating known structural components while accounting for parametric uncertainty, gray-box approaches offer the potential for significantly improved data efficiency and decision quality \citep{Astudillo2021}.

\begin{figure}
\centering
\includegraphics[width=0.9\linewidth]{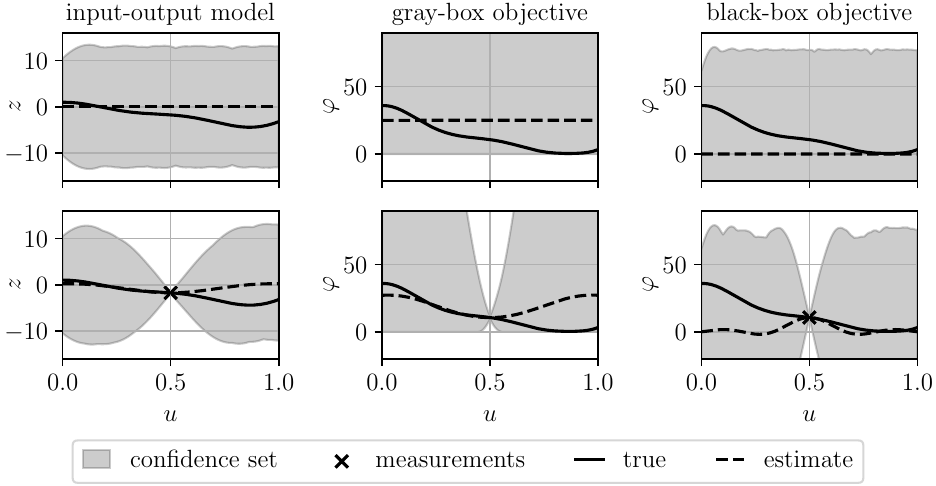}
\vspace*{10pt}
\caption{Black-box vs. gray-box modelling:
We consider a scalar input-output model $z = a_{0}
+\sum_{i=1}^{K}
\left(
a_{i}\cos(i u)
+b_{i}\sin(i u)
\right)$, where $\theta = (a_0, \ldots, a_K, b_1, \ldots, b_K)$, which corresponds to a Fourier series of order $K = 10$, and a loss $l(u, z) = 0.01 u^2 + z^2$.
The a priori estimate of the model and the corresponding confidence set (which is discussed in more detail in Section~\ref{sec:lls}) are shown in the top left plot.
The bottom left plot shows model and confidence sets after evaluation at $u = 0.5$ and observing a corresponding measurement. 
The middle column shows the associated values of the objective $\varphi$ within a gray-box approach, i.e., the objective is given as the composition of the input-output model and the given loss.
While the objective is a priori highly uncertain, it is lower bounded by zero as induced by the structure of the loss.
On the other hand, when considering the same problem within a black-box framework, where we directly model the objective as a parametric function that is linear in the parameters, positiveness of the objective is not preserved as illustrated by the two plots on the right. 
}
\label{fig:fourier}
\end{figure}

\newpage

Motivated by these considerations, this paper investigates a sequence of gray-box optimization problems of the form
\begin{align}\label{eq:model-intro}
u_n^\star \in \arg \min_{u \in \U_n} ~ l_n(u, A_n(u)\thetatrue),
\end{align}
for $n \in \NsmallerN$.
In~\eqref{eq:model-intro}, the decision variable is the action $u \in \U_n$ and the objective function $\objective_n(u, \thetatrue) := l_n(u, A_n(u)\thetatrue)$ at time step $n$ is given by the composition of a known loss function $l_n: \U_n \times \R^{n_z} \rightarrow \R$ and a parametric model $z = A_n(u)\thetatrue$ defined by a known matrix-valued function $A_n: \U_n \rightarrow \R^{n_z \times n_\theta}$ and the true parameter value $\thetatrue \in \thetaSet \subset \R^{\ntheta}$.

Since $\thetatrue$ is unknown, we cannot directly solve \eqref{eq:model-intro} to obtain the optimal action $\uopttrue_n$.
The sequential gray-box optimization task thus requires leveraging the dual control effect \citep{Feldbaum1960}:
After deciding on an action $u_n$, we observe a noisy measurement $y_n = A_n(u_n)\thetatrue + v_n$ of the multivariate model outputs where $v_n$ denotes zero-mean noise.
Each action $u_n$ therefore serves two purposes.
On the one hand, it seeks to optimize the immediate costs (\textit{exploitation}).
On the other hand, it probes the system in regions where improved knowledge may lead to improved future decisions (\textit{exploration}).
In contrast to the black-box setting, the gray-box setting allows the decision-maker to infer information about the unknown parameters from (potentially multivariate) measurement of the model outputs, as illustrated in Figure~\ref{fig:fourier}.

To address the sequential gray-box problem, we propose a novel structure-exploiting method that leverages both the known loss function $l_n$ and prior information about the admissible parameter space given by $\thetaSet$.
The approach is based on the principle of \textit{optimism in the face of uncertainty} \citep{Bubeck2012}, where each action is chosen to minimize a lower confidence bound (LCB) on the true objective function.
This principle is motivated by the following intuition:
If the uncertainty associated with objective value -- where uncertainty is in the following described by a confidence set -- is large at the point of evaluation, the gathered observation will improve the knowledge about the unknown parameter (\textit{exploration}).
If the lower bound is tight at the minimum, the action will necessarily incur low costs (\textit{exploitation}).
Therefore, this strategy naturally balances the trade-off between exploration and exploitation.
In contrast to explicit dual control approaches \citep{FIlatov2000}, which enforce this trade-off through an explicit regularization term and therefore often induce \textit{undirected} exploratory behavior, the proposed optimistic approach captures the same principle implicitly.
In the time-invariant setting or if subsequent problem instants are similar, this leads to a more targeted exploration strategy.
Indeed, exploration emerges only when uncertainty can plausibly conceal a better solution, and is therefore restricted to directions that have the potential to reduce the objective.

We analyze the performance of the proposed method in terms of regret, building upon established results from the linear bandit literature \citep{Dani2008, Abbasi-Yadkori2011, Rusmevichientong2010, Lattimore2020}.
By integrating recent advances in the construction of data-dependent parameter confidence sets for least-squares estimators \citep{Simpson2026},
our regret analysis provides, as a special case, improved regret bounds for the stochastic linear bandit and contextual bandit problem compared to the state of the art.

Finally, we complement the theoretical findings with numerical examples that demonstrate the practical advantages of exploiting problem structure.
The results highlight that structure-exploiting methods can substantially outperform structure-agnostic black-box approaches, emphasizing the importance of moving beyond purely black-box formulations when additional knowledge is available.

\subsection{Related Work}

Similar to this paper, \cite{Chowdhury2021} and \cite{Wang2026} propose to address the sequential gray-box optimization problem with methods based on the principle of optimism in the face of uncertainty.
In contrast to the parametric models used in this work, they consider nonparametric kernel regression to identify a multivariate operator that lies in a reproducing kernel Hilbert space.
Regarding the model class, their approaches thus generalize our problem setting.
The focus on parametric models, however, allows us to take into account an a priori set of admissible parameters, which is not possible within the more general nonparametric setting.
In addition, the approach proposed by \cite{Chowdhury2021} does not exploit the full structure provided by the known loss function within the subproblems solved in each iteration, but instead only leverages knowledge of its Lipschitz constant.
\cite{Wang2026} focus on linear loss functions, while our approach allows for general Lipschitz-continuous loss functions.

Similarly, gray-box optimization using Gaussian processes as the model class is investigated by \cite{Astudillo2019} and \cite{Astudillo2021} under the term Bayesian optimization with composite functions.
They, however, focus on the expected improvement acquisition function, which is optimized using first-order optimization techniques based on Monte Carlo (MC) estimates of the gradient.
\cite{Paulson2022} consider a similar gray-box Bayesian optimization setting but in addition allow for (known) constraints on actions and model outputs.
They also consider a variant of the expected improvement acquisition function with MC gradient estimates.
In addition, the software framework presented in \cite{Balandat2020} allows for gray-box Bayesian optimization and provides efficient MC gradient estimates for various acquisition functions such as expected improvement and knowledge gradient.
The approaches mentioned above rely on first-order optimization based on MC gradient estimates of the acquisition function.
In contrast, our approach does not require MC sampling and can leverage second-order optimization methods.

While parametric models are not commonly considered in the context of gray-box Bayesian optimization, they are widely used within sequential black-box optimization.
In particular, the linear bandit and the contextual linear bandit problems arise as special cases of our sequential gray-box optimization setup:
If the loss function is of the form $l_n(u, z) = z$, and the scalar parametric model is linear in the actions, $y_n = u_n\T \,\thetatrue + v_n$, we recover the stochastic linear bandit problem introduced by \cite{Abe1999} originally considering a finite number of actions.
If the loss is $l_n(u, z) = z$, but the actions enter the scalar parametric model nonlinearly, $y_n = a_n(u_n)\T \thetatrue + v_n$, the problem reduces to the contextual bandit problem \citep{Li2010}.
\cite{Auer2002} first proposed an algorithm for linear bandits based on the principle of \textit{optimism in the face of uncertainty}.
Further significant contributions to the analysis of LCB-type algorithms for linear bandits, which our regret analysis builds on, have been made by \cite{Dani2008}, \cite{Rusmevichientong2010}, and \cite{Abbasi-Yadkori2011}.

At first glance, the investigated gray-box setting might resemble generalized linear bandits \citep{Nelder1972} for which a confidence bound algorithm has been introduced by \cite{Filippi2010}.
With generalized linear bandits, however, the objective function is given as $l(a_n(u)\T \theta)$, a composition of a scalar monotonic \textit{link} function $l(\cdot)$ with a single-output parametric model.

For all of the above variations of the bandit problem, the quantity being measured is always the scalar objective value.
This is a key difference to our setup, as we allow the use of multivariate measurements of the model outputs.

From a more applied perspective, the investigated gray-box setting is typical for control and robotics applications where the loss as a function of the control actions and the system outputs is known -- and might be even chosen or tuned by the control engineer -- while an accurate system model relating the control actions to the system outputs is not available.
Control applications covered by this problem setup comprise steady-state optimization, as analyzed for example within the field of modifier adaptation \citep{Marchetti2009a}, and iterative learning control where typically both actions and outputs are trajectories arising from the discretization of a continuous-time state-space model \citep{Bristow2006, Volckaert2010, Schoellig2012a}.

\subsection{Contribution}

The main contribution of this paper is a structure-exploiting method for sequential gray-box optimization with parametric models based on the principle of \textit{optimism in the face of uncertainty}.
Unlike existing approaches, the proposed method leverages both the known structure of the loss function and prior knowledge in the form of an admissible parameter set.
We further provide a regret analysis of the algorithm, which, as a special case, improves upon the state-of-the-art bounds for contextual linear stochastic bandits.

\subsection{Outline}

The remainder of the paper is structured as follows.
Section~\ref{sec:setup} introduces the problem setup and defines the regret, which will be used as a performance criterion for the proposed algorithm.
Section~\ref{sec:lls} briefly recalls linear least-squares estimation and the construction of data-dependent confidence sets for the estimated parameter.
These confidence sets are used in Section~\ref{sec:problem} to devise the novel structure-exploiting approach for sequential gray-box optimization.
In Section~\ref{sec:regret}, we provide a detailed regret analysis for the proposed method.
Section~\ref{sec:linear-bandit} discusses the theoretical findings for the linear bandit problem, which arises as a special case of the sequential gray-box optimization setting.
In Section~\ref{sec:experiments}, we contrast the proposed structure-exploiting gray-box method with the structure-agnostic black-box approach on illustrative examples.
Section~\ref{sec:conclusion} concludes the paper with a discussion of the results and an outlook on future research directions.

\subsection{Notation}

We use the shorthand notation $(x, y) := [x\T ~y\T]\T \in \R^{n+m}$ to denote vertical composition of column vectors $x\in \R^n$, $y \in \R^m$.
The weighted norm $\Vert x \Vert_M$ is given by $\Vert x \Vert_M = \sqrt{x\T M x}$ for any positive semidefinite matrix $M$.
In addition, we use $\Vert M \Vert_\mathrm{F}$ to denote the Frobenius norm of matrix $M$.
For square matrices $M$, $\rho(M)$ denotes the spectral radius.
We furthermore use the notation $\NsmallerN = \left\lbrace 0, 1, \ldots, N-1\right\rbrace$.

\section{Problem statement}
\label{sec:setup}

In the following, we introduce the potentially time-varying sequential gray-box optimization problem and define the performance criterion that will be used to evaluate the proposed approach.

For $n \in \NsmallerN$, we consider a time-varying input-output model
defined by some known matrix-valued function $A_n: \U_n \rightarrow \R^{n_z \times \ntheta}$ and an unknown parameter $\thetatrue \in  \thetaSet \subset \R^{\ntheta}$:
\begin{align}
y_n = A_n(u_n)\thetatrue + v_n,
\end{align}
where $v_n$ denotes zero-mean measurement noise.
We call $u_n$ the input or action, $z_n = A_n(u_n)\thetatrue$ the model output, and $y_n = A_n(u_n)\thetatrue + v_n$ its noisy observations.
While we do not know the true parameter $\thetatrue$, we assume that a set $\thetaSet \subset \R^{\ntheta}$ is given that contains $\thetatrue$ and encodes prior knowledge about admissible parameter values.
For instance, in applications where $\theta$ represents concentrations, such as the example discussed in Section~\ref{sec:example-steel}, a natural choice for the set of admissible parameters is $\thetaSet = [0, 1]^{\ntheta}$.

After choosing an admissible action $u_n \in \U_n$ at time step $n$, we receive the noisy measurement $y_n$, which can be used to estimate the unknown parameter.
In addition, the chosen action $u_n$ incurs a cost $\objective_n(u_n, \thetatrue)$ given by
\begin{align}
\objective_n(u_n, \thetatrue) := l_n(u_n, A_n(u_n)\thetatrue),
\end{align}
where  $l_n: \U_n \times \R^{n_z} \rightarrow \R$ is a known loss function.
In contrast to the generalized linear bandit setting, we do not observe a noisy measurement of $\objective_n(u_n, \thetatrue)$ after choosing action $u_n$.

Let $\info_{n} = (u_0, y_0, \ldots, u_{n-1}, y_{n-1})$ denote the available data at time step $n$.
We aim to find a time-varying policy $u_n = \pi_n(\info_{n})$ that minimizes the total costs over a horizon $N$.
As $\thetatrue$ is unknown, the policy needs to trade off exploration and exploitation in order to ensure, on the one hand, informative measurements to estimate $\thetatrue$, while, on the other hand, keeping the incurred loss small.

To evaluate the performance of such a policy $u_n = \pi_n(\info_{n})$ over a horizon $N$, we consider the cumulative regret $R_N$ defined as
\begin{align}
R_N = \sum_{n=0}^{N-1} \objective_n(u_n, \thetatrue) - \objective_n(\uopttrue_n, \thetatrue),
\end{align}
where $\uopttrue_n$ is an optimal action at time step $n$ given as a solution to
\begin{mini}|s|
	{\scriptstyle{u \in \U_n}}
	{l_n(u, A_n(u)\thetatrue).}
	{}
	{\uopttrue_n \in \arg}
\end{mini}
We call $r_n = \objective_n(u_n, \thetatrue) - \objective_n(\uopttrue_n, \thetatrue)$ the instantaneous regret.

\begin{remark}[Frequentist vs. Bayesian approach]
In this paper, we take a frequentist approach meaning that the unknown parameter is considered deterministic and stochasticity enters the problem only via the measurement noise.
In contrast, a Bayesian approach would model the unknown parameter as a random variable.
The uncertainty is in this case quantified by the posterior distribution of the parameter given the data.
Even if the probability distributions of prior and noise are exactly known, deriving credible sets based on the posterior distribution is often intractable, with only rare exceptions such as the case of Gaussian prior and noise.
In contrast, our assumption on the noise, cf. Assumption~\ref{ass:subgaussian}, covers a variety of noise distribution and in particular does not require exact knowledge of the distribution.
In the Bayesian framework, the performance criterion is typically the expected cumulative costs and the optimal policy is characterized by the Bellman equation \citep{Bellman1957}.
Numerical computation of the optimal policy for general loss function is however intractable even if prior and noise distributions are Gaussian.
\end{remark}

\section{Linear least-squares estimation and data-dependent confidence sets}
\label{sec:lls}

Before introducing the novel structure-exploiting method, we briefly recall constrained linear least-squares estimation, which we will employ in the following to estimate the unknown parameter from the data $\info_n$.
In particular, we discuss how data-dependent confidence sets for the parameter can be constructed.

For $n \in \NsmallerN$, we define the constrained linear least-squares estimate $\mu_n(\info_n)$ of the parameter $\thetatrue$ based on the data $\info_n$ as
\begin{mini}|s|
	{\scriptstyle{\theta \in \thetaSet}}
	{\frac{1}{2}\left\Vert \theta - \mu_0 \right\Vert^2_{\Lambda_0} + \sum_{i=0}^{n-1} \frac{1}{2}\left\Vert y_i - A_i(u_i)\theta \right\Vert^2_{V},}
	{\label{eq:lls}}
	{\mu_n(\info_n) = \arg}
\end{mini}
for positive definite weighting matrices $\Lambda_0 \succ 0$, $V \succ 0$, where $\mu_0$ and $\Lambda_0$ are given regularization parameters.
Let $\llsobjective_n(\theta; \info_n)$ denote the objective function of \eqref{eq:lls}.
We denote its Hessian by $\Lambda_n(\info_n) := \nabla_{\theta}^2 \llsobjective_n(\theta; \info_n)$, it is explicitly given by
\begin{align} \label{eq:lls-hessian}
\Lambda_n(\info_n) = \Lambda_0 + \sum_{i=0}^{n-1} A_i(u_i)\T V A_i(u_i).
\end{align}

\begin{assumption}[Conditional subgaussian noise.] \label{ass:subgaussian}
For all $n \in \NsmallerN$, the measurement noise $v_n$ conditioned on $(\info_n, u_n)$ is zero-mean subgaussian with proxy variance $c_v^2 V\inv$ where $c_v > 0$ is a known constant.
\end{assumption}

Note that Assumption~\ref{ass:subgaussian} is satisfied for $v_n \sim \mathcal{N}(0, \Sigma^v)$ if $\rho(\Sigma^v V) \leq c_v^2$.
Furthermore, the assumption covers zero-mean bounded noise, i.e., the case where $\Vert v_n \Vert_V \leq c_v$ almost surely and $\expected{}{v_n|\info_{n}, u_n} = 0$.

\begin{assumption}[Bounded and convex set of admissible parameters.] \label{ass:prior}
Let $\Lambda_0$ and $\mu_0$ be the regularization parameters in the constrained linear least-squares estimation problem \eqref{eq:lls}.
The admissible set of parameters $\thetaSet$ is convex and contains both $\thetatrue$ and $\mu_0$.
Furthermore, there is a constant $c_\theta > 0$ such that $\Vert \theta - \mu_0 \Vert_{\Lambda_0} \leq c_\theta$ for all $\theta \in \thetaSet$.
\end{assumption}

Under these assumptions, recent work on linear least-squares estimation \citep{Simpson2026}, improving on results in \cite{Abbasi-Yadkori2011} and most importantly generalizing these results to multi-output systems, implies the following uniform bound for constructing confidence sets based on the data $\info_n$.

\begin{theorem}[Data-dependent confidence bound.] \label{thm:sim-conf-bound-data-dependent-constrained}
Suppose Assumptions~\ref{ass:subgaussian} and \ref{ass:prior} hold.
For any confidence level $\delta \in (0, 1)$, we  have
\begin{align}
\mathbb{P}\left[\Vert \thetatrue - \mu_n\Vert_{\Lambda_n} \leq \gamma_n(\delta), \forall n \in \NsmallerN \right] \geq 1 - \delta,
\end{align}
for
\begin{align} \label{eq:data-dependent-gamma-constrained}
\gamma_n(\delta) := \sqrt{c_\theta^2 + c_v^2 \log\left(\det\left(\Lambda_0\inv \Lambda_n\right)\right) + 2 c_v^2 \log\left(\tfrac{1}{\delta}\right)}.
\end{align}
\end{theorem}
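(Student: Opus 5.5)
We will prove the bound with a self-normalized martingale argument in the style of \cite{Abbasi-Yadkori2011}, adapted to multi-output observations and to the constraint $\theta\in\thetaSet$.

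\textbf{Step 1: a deterministic inequality for the constrained estimate.} Write $A_i := A_i(u_i)$, define the noise vector $S_n := \sum_{i<n} A_i\T V v_i$, and set $\bar\Lambda_n := \Lambda_n - \Lambda_0 = \sum_{i<n} A_i\T V A_i$. The objective $\llsobjective_n$ is a strongly convex quadratic with Hessian $\Lambda_n$. Because $\mu_n$ minimizes it over the convex set $\thetaSet$ and $\thetatrue\in\thetaSet$, the first-order optimality condition gives $\nabla\llsobjective_n(\mu_n)\T(\thetatrue-\mu_n)\ge 0$. Expanding the quadratic around $\mu_n$ then yields $\llsobjective_n(\thetatrue)-\llsobjective_n(\mu_n)\ge\tfrac12\Vert\thetatrue-\mu_n\Vert_{\Lambda_n}^2$.

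Next, substitute $y_i = A_i\thetatrue + v_i$ and use $\llsobjective_n(\mu_n)\le\llsobjective_n(\theta)$ for all $\theta\in\thetaSet$. Writing $\Delta:=\theta-\thetatrue$, this gives
\[
\tfrac12\Vert\thetatrue-\mu_n\Vert_{\Lambda_n}^2 \le \sup_{\Delta}\Big( S_n\T\Delta - \tfrac12\Vert\Delta\Vert_{\bar\Lambda_n}^2 \Big) + \tfrac12\Vert\thetatrue-\mu_0\Vert_{\Lambda_0}^2 - \tfrac12\Vert\mu_n-\mu_0\Vert_{\Lambda_0}^2,
\]
with the supremum taken over $\Delta\in\thetaSet-\thetatrue$. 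To keep $\Lambda_0$ in the normalization, I would instead regroup so that the bound reads $\Vert\thetatrue-\mu_n\Vert_{\Lambda_n}^2\le c_\theta^2 + \Vert S_n\Vert_{\Lambda_n\inv}^2$, up to the exact constant. Concretely, I would compare with the unconstrained ridge solution centred at $\thetatrue$ and use Assumption~\ref{ass:prior} to bound the $\Lambda_0$-terms by $c_\theta^2$. Getting the sum (rather than $(c_\theta+\Vert S_n\Vert_{\Lambda_n^{-1}})^2$) is where the $\Lambda_0$-prior term must be absorbed carefully. I would use $2ab\le a^2+b^2$ only where the cross term cancels against the constrained optimality.

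\textbf{Step 2: self-normalized concentration for multi-output noise.} The goal is to show that, with probability at least $1-\delta$ uniformly in $n$,
\[
\Vert S_n\Vert_{\Lambda_n\inv}^2\le c_v^2\big(\log\det(\Lambda_0\inv\Lambda_n)+2\log(1/\delta)\big).
\]
For fixed $\lambda$, the process $M_n^\lambda=\exp\big(\lambda\T S_n/c_v-\tfrac12\Vert\lambda\Vert_{\bar\Lambda_n}^2\big)$ is a supermartingale. This follows from Assumption~\ref{ass:subgaussian} applied to the vector $V A_i\lambda$: conditionally, $\mathbb E[\exp(\lambda\T A_i\T V v_i/c_v)]\le\exp(\tfrac12\lambda\T A_i\T V V\inv V A_i\lambda)$.

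Next, mix over $\lambda\sim\mathcal N(0,\Lambda_0\inv)$. Completing the square gives the mixture in closed form, $\bar M_n=\det(\Lambda_0\inv\Lambda_n)^{-1/2}\exp\big(\tfrac{1}{2c_v^2}\Vert S_n\Vert^2_{\Lambda_n\inv}\big)$, which is still a nonnegative supermartingale with $\bar M_0=1$. Ville's maximal inequality, $\mathbb P[\exists n:\bar M_n\ge1/\delta]\le\delta$, then gives the uniform-in-time bound, and rearranging produces exactly the logarithmic terms in $\gamma_n(\delta)$.

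\textbf{Step 3: combine and identify the obstacle.} Combining Steps 1 and 2 gives $\Vert\thetatrue-\mu_n\Vert_{\Lambda_n}\le\gamma_n(\delta)$ for all $n$ on a single event of probability at least $1-\delta$. I expect the main obstacle to be Step 1, namely obtaining the additive form $c_\theta^2+\Vert S_n\Vert^2_{\Lambda_n\inv}$ in the presence of the constraint. The crude triangle inequality only gives the weaker $(c_\theta+\cdot)^2$ form, as in \cite{Abbasi-Yadkori2011}, so this step is precisely the improvement attributed to \cite{Simpson2026}. If the direct argument fails, I would carry out the mixture argument of Step 2 with the prior mean $\mu_0$ and the unknown offset built into the supermartingale. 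That is, mix $\lambda$ around a shifted center, so that the term $\Vert\thetatrue-\mu_0\Vert_{\Lambda_0}^2\le c_\theta^2$ appears additively inside the exponent, and use the constrained optimality only to pass from the unconstrained to the constrained estimator.
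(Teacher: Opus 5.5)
Your Step 2 is correct. The deterministic target of Step 1, $\Vert\thetatrue-\mu_n\Vert^2_{\Lambda_n}\le c_\theta^2+\Vert S_n\Vert^2_{\Lambda_n\inv}$, is false, though. So your main line (Step 1 combined with the zero-centred mixture of Step 2) cannot deliver $\gamma_n(\delta)$.

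For a counterexample, take $\ntheta=1$, $\Lambda_0=V=A_0=1$, $\mu_0=0$, $\thetaSet=[-1,1]$ (so $c_\theta=1$), $\thetatrue=-1$, and the noise realization $v_0=1$. Then $y_0=0$, $\Lambda_1=2$ and $S_1=1$. The unconstrained minimizer $y_0/2=0$ lies in $\thetaSet$, so $\mu_1=0$. This gives $\Vert\thetatrue-\mu_1\Vert^2_{\Lambda_1}=2>\tfrac32=c_\theta^2+\Vert S_1\Vert^2_{\Lambda_1\inv}$.

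The failure is structural. If you carry out the regrouping you describe, the $\Lambda_0$-terms combine exactly into $\Vert\Delta_n\Vert^2_{\Lambda_n}\le\big(S_n+\Lambda_0(\mu_0-\thetatrue)\big)\T\Delta_n$ with $\Delta_n=\mu_n-\thetatrue$, with equality in the unconstrained case. Expanding $\Vert S_n+\Lambda_0(\mu_0-\thetatrue)\Vert^2_{\Lambda_n\inv}$ leaves the cross term $2(\mu_0-\thetatrue)\T\Lambda_0\Lambda_n\inv S_n$. This term is positive whenever the noise is aligned with the prior error. No cancellation against constrained optimality removes it. It can only be controlled jointly with $S_n$ in the probabilistic step. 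Bounding $\Vert S_n\Vert_{\Lambda_n\inv}$ alone, as Step 2 does, necessarily leaves you with the $(c_\theta+\cdot)^2$ form.

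Your fallback in Step 3 is the right argument and is essentially the paper's proof. Stop Step 1 at $\Vert\mu_n-\thetatrue\Vert_{\Lambda_n}\le\Vert\Lambda_0(\mu_0-\thetatrue)+S_n\Vert_{\Lambda_n\inv}$. This follows from first-order optimality plus Cauchy--Schwarz, i.e., the constrained estimate is the $\Lambda_n$-projection of the unconstrained one onto $\thetaSet\ni\thetatrue$. Then bound the right-hand side directly; the paper does this by citing Theorem~1 of \cite{Simpson2026}.

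Concretely, mix your supermartingale over the fixed distribution $\lambda\sim\mathcal{N}\big((\mu_0-\thetatrue)/c_v,\Lambda_0\inv\big)$. This is legitimate because $\thetatrue$ is deterministic. The mixture is
\[
\bar M_n=\det(\Lambda_0\inv\Lambda_n)^{-1/2}\exp\Big(\tfrac{1}{2c_v^2}\big(\Vert S_n+\Lambda_0(\mu_0-\thetatrue)\Vert^2_{\Lambda_n\inv}-\Vert\mu_0-\thetatrue\Vert^2_{\Lambda_0}\big)\Big),
\]
with $\bar M_0=1$. Ville's inequality together with $\Vert\mu_0-\thetatrue\Vert_{\Lambda_0}\le c_\theta$ then yields exactly $\gamma_n(\delta)^2$, uniformly in $n$. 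So drop the additive split in Step 1 and the zero-centred mixture, and make the fallback the proof.
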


\begin{proof}
First, note that the objective in \eqref{eq:lls} can be expressed as
\begin{align}
\llsobjective_n(\theta) = \frac{1}{2} \Vert \theta - \thetatrue \Vert^2_{\Lambda_n} + \left(\Lambda_0(\thetatrue - \mu_0) - s_n\right)\T \,(\theta - \thetatrue) + \text{ constant},
\end{align}
where $s_n =  \sum_{i=0}^{n-1} A_i(u_i)\T V v_i$.
	As $\thetaSet$ is convex and $\llsobjective_n$ is a convex quadratic function, the constrained linear least-squares problem \eqref{eq:lls} is a convex optimization problem.
	Therefore, by optimality of $\mu_n$, we conclude
	\begin{align}
		\nabla \llsobjective_n(\mu_n)\T (\theta - \mu_n) \geq 0 \text{ for all } \theta \in \thetaSet.
	\end{align}
	In particular, since $\theta^\star \in \thetaSet$, we have
	\begin{align}
		-\bigg(
			\Lambda_n (\mu_n - \thetatrue) + \left(\Lambda_0(\thetatrue - \mu_0) - s_n\right)
		\bigg)\T  (\mu_n - \thetatrue) \geq 0.
	\end{align}
	After rearranging, we obtain
	\begin{align}
	\Vert \mu_n - \thetatrue \Vert^2_{\Lambda_n}
	&\leq -\left(\Lambda_0(\thetatrue - \mu_0) - s_n\right)\T (\mu_n - \thetatrue) \\
	&=
	-\bigg(\Lambda_n^{-\frac{1}{2}} \left(\Lambda_0(\thetatrue - \mu_0) - s_n\right) \bigg)\T \,\Lambda_n^{\frac{1}{2}}(\mu_n - \thetatrue)
	.
	\end{align}
Applying the Cauchy-Schwarz inequality yields
\begin{align}
\Vert \mu_n - \thetatrue \Vert^2_{\Lambda_n} \leq \left\Vert \Lambda_0(\mu_0 - \thetatrue) + s_n \right\Vert_{\Lambda_n\inv} \left\Vert \mu_n - \thetatrue \right\Vert_{\Lambda_n}.
\end{align}
Dividing by $\Vert \mu_n - \thetatrue \Vert_{\Lambda_n}$, we arrive at
$ \Vert \mu_n - \thetatrue \Vert_{\Lambda_n} \leq \left\Vert \Lambda_0(\mu_0 - \thetatrue) + s_n \right\Vert_{\Lambda_n\inv}$.
Applying Theorem~1 from \cite{Simpson2026} yields the desired result.
\end{proof}

Note that we can also choose $\gamma_0 = c_\theta$, which is strictly smaller than \eqref{eq:data-dependent-gamma-constrained} for $n=0$, and obtain the same bound due to Assumption~\ref{ass:prior}.
Furthermore, we point out that Theorem~\ref{thm:sim-conf-bound-data-dependent-constrained} coincides with the bound in Theorem 2 in \cite{Simpson2026} for unconstrained linear-least squares estimation.
For a discussion of the improvement of the above bound in comparison to the results in \cite{Abbasi-Yadkori2011}, we refer to \cite{Simpson2026}.

The method proposed in the next section can be applied with either the unconstrained or constrained linear least-squares estimator.
Depending on the application, it may be essential that the estimate $\mu_n$ satisfies $\mu_n \in \thetaSet$.
Again considering concentrations as an example, a negative estimate might result in unreasonable or even ill-defined model outputs.
If the condition $\mu_n \in \thetaSet$ is not critical, the constraint might be omitted for computational efficiency, since the unconstrained linear least-squares problem admits a closed-form solution.

\begin{remark}
The result in Theorem~\ref{thm:sim-conf-bound-data-dependent-constrained} can be extended to nonconvex admissible sets $\thetaSet$.
In this more general case, we, however, obtain $\tilde\gamma_n(\delta) = 2 \gamma_n(\delta)$ with $\gamma_n(\delta)$ as in \eqref{eq:data-dependent-gamma-constrained}, which follows from similar arguments as in the proof of Theorem~\ref{thm:sim-conf-bound-data-dependent-constrained}:

Due to optimality of $\mu_n \in \thetaSet$, we have
\begin{align}
\llsobjective_n(\mu_n) - \llsobjective_n(\thetatrue)
&=
\frac{1}{2} \Vert \mu_n - \thetatrue \Vert^2_{\Lambda_n} + \left(\Lambda_0(\thetatrue - \mu_0) - s_n\right)\T \,(\mu_n - \thetatrue)  \leq 0,
\end{align}
Rearranging and multiplying by two yields
\begin{align}
\Vert \mu_n - \thetatrue \Vert^2_{\Lambda_n} \leq -2\left(\Lambda_0(\thetatrue - \mu_0) - s_n\right)\T \,(\mu_n - \thetatrue).
\end{align}
Applying the Cauchy-Schwarz inequality and Theorem 1 from \cite{Simpson2026} yields the result.
\end{remark}

\section{A Novel Method for Sequential Gray-Box Optimization}
\label{sec:problem}

In the following, we introduce the novel structure-exploiting algorithm for gray-box optimization, which is based on the principle of \textit{optimism in the face of uncertainty}.
At each time step, the method minimizes a lower confidence bound (LCB) of the true, but unknown, objective, exploiting the known problem structure provided by the set of admissible parameter values $\thetaSet$ and by the loss function $l_n(u, z)$.
The proposed method builds on the key assumption that, given the available data $\info_n$, it is possible to construct a confidence set for the unknown parameter such that the true parameter value lies within this set with high probability.

At iteration $n \in \NsmallerN$ and given the data $\info_{n}$, as well as the confidence level $\delta$, the next evaluation point $u_{n}$ is selected as a solution to $\problem_n(\delta)$ defined as
\begin{mini!}|s|
	{\scriptstyle{u, \theta}}
	{l_n\left(u, A_n(u)\theta\right) }
	{\label{nlp:lcb-method}}
	{\problem_n(\delta): \quad }
	\addConstraint{\Vert \theta}{- \mu_{n}\Vert_{\Lambda_{n}} \leq \gamma_{n}(\delta) \label{con:confidence-set}}
	\addConstraint{u}{\in \U_n \label{con:u-set}}
	\addConstraint{\theta}{\in \thetaSet, \label{con:theta-set}}
\end{mini!}
with $\mu_{n}, \Lambda_{n}$ and $\gamma_{n}(\delta)$ as defined in \eqref{eq:lls}, \eqref{eq:lls-hessian} and \eqref{eq:data-dependent-gamma-constrained}, respectively.
Evaluation of the true model at $u_{n}$ yields the noisy observation $y_{n} = A_n(u_{n})\thetatrue + v_{n}$ and the procedure is repeated for problem $\problem_{n+1}(\delta)$ with updated confidence set based on the data $\info_{n+1} = (\info_{n}, u_{n}, y_{n})$.

We point out that problem \eqref{nlp:lcb-method} can be infeasible if unconstrained linear least-squares estimation is employed and the parameter confidence set is conflicting with the set of admissible parameters $\thetaSet$.
Under the given assumptions, however, the problems $\problem_n(\delta)$ are feasible for all $n \in \NsmallerN$ with high probability, as shown next.

\begin{proposition}[Feasibility]
Suppose Assumptions~\ref{ass:subgaussian} and \ref{ass:prior} hold.
For any confidence level $\delta \in (0, 1)$, the following statement holds with probability at least $1 - \delta$:
The problems $\problem_n(\delta)$ admit a feasible point for all $n \in \NsmallerN$.
\end{proposition}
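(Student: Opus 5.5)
The plan is to exhibit an explicit feasible point, namely $(u, \theta) = (\bar u_n, \thetatrue)$ for an arbitrary admissible action $\bar u_n \in \U_n$, and to show that it satisfies all constraints of $\problem_n(\delta)$ simultaneously for every $n \in \NsmallerN$ on a single event of probability at least $1-\delta$. The natural choice of event is the one from Theorem~\ref{thm:sim-conf-bound-data-dependent-constrained},
\begin{align*}
E_\delta := \left\lbrace \Vert \thetatrue - \mu_n\Vert_{\Lambda_n} \leq \gamma_n(\delta) \ \text{for all } n \in \NsmallerN \right\rbrace,
\end{align*}
which has $\mathbb{P}[E_\delta] \geq 1-\delta$ under Assumptions~\ref{ass:subgaussian} and \ref{ass:prior}. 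The uniformity over $n$ in that theorem is what allows a single event to cover all $n$, so no union bound is needed.

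On $E_\delta$ I will check the three constraints in turn. Constraint~\eqref{con:confidence-set} holds for $\theta = \thetatrue$ by the definition of $E_\delta$. Constraint~\eqref{con:theta-set} holds deterministically, because Assumption~\ref{ass:prior} states $\thetatrue \in \thetaSet$. Constraint~\eqref{con:u-set} only requires $\U_n \neq \emptyset$, which I will take as implicit in the problem setup, since otherwise not even the optimal action $\uopttrue_n$ would be defined. Because the constraints of $\problem_n(\delta)$ are decoupled in $u$ and $\theta$, the pair $(\bar u_n, \thetatrue)$ is then feasible.

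I will also note that the argument goes through whether $\mu_n$ is the constrained estimate~\eqref{eq:lls} or its unconstrained counterpart. In the latter case Theorem~\ref{thm:sim-conf-bound-data-dependent-constrained} is replaced by the identical bound for unconstrained least squares mentioned after its proof (Theorem~2 in \cite{Simpson2026}). In that case infeasibility is genuinely possible off the event $E_\delta$, which is why the statement is only probabilistic.

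I do not expect a real obstacle, since the argument is a direct consequence of the uniform confidence bound. The only point requiring care is to state explicitly that the high-probability statement refers to one event valid for all $n$ simultaneously, rather than to a separate event for each $n$.
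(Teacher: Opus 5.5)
Your proof is correct. For the unconstrained estimator it is exactly the paper's argument: on the single uniform event from Theorem~2 of \cite{Simpson2026} (the same bound as Theorem~\ref{thm:sim-conf-bound-data-dependent-constrained}), the pair $(\bar u_n, \thetatrue)$ is feasible for all $n$.

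For the constrained estimator the paper uses a different witness. By construction of \eqref{eq:lls} we have $\mu_n \in \thetaSet$, and trivially $\Vert \mu_n - \mu_n \Vert_{\Lambda_n} = 0 \leq \gamma_n(\delta)$. Hence $(\bar u_n, \mu_n)$ is feasible for every realization of the data. Feasibility then holds surely, with no appeal to the confidence bound.

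Your argument through $\thetatrue$ handles both estimators in one stroke. It also reuses precisely the event that later drives Lemma~\ref{lem:lcb} and the regret bounds. In the constrained case, however, it only gives the probabilistic conclusion. The paper's observation gives the deterministic one, which is why the paper flags possible infeasibility only for the unconstrained estimator.
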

\begin{proof}
If constrained linear least-squares is employed, feasibility is guaranteed as $\mu_n \in \thetaSet$ for all $n \in \NsmallerN$.
If unconstrained linear least-squares is employed, the statement is a direct consequence of Assumption~\ref{ass:prior} and Theorem~2 in \cite{Simpson2026}.
\end{proof}

The proposed approach can alternatively be defined in terms of the minimization of an acquisition function, in the following denoted by $Q_n(u; \delta)$, which is a lower bound on the true objective $\objective_n(u, \thetatrue)$ with high probability.
The proposed approach is thus \textit{optimistic} as it underestimates the true costs associated with the action $u$.

\begin{definition}[Acquisition function]
The next evaluation point $u_{n}$, given as a solution to $\problem_n(\delta)$, can equivalently be obtained as a minimizer of the acquisition function $Q_n(u; \delta)$,
\begin{mini*}|s|
	{\scriptstyle{u \in \U_n}}
	{Q_n(u; \delta),}
	{}
	{u_{n} \in \arg}
\end{mini*}
where $Q_n(u; \delta)$ is defined as
\begin{mini!}|s|
	{\scriptstyle{\theta \in \thetaSet}}
	{l_n\left(u, A_n(u)\theta\right) }
	{\label{nlp:lcb-method-acquisition-function}}
	{Q_n(u; \delta) :=}
	\addConstraint{\Vert \theta}{- \mu_{n}\Vert_{\Lambda_{n}} \leq \gamma_{n}(\delta).}
\end{mini!}
\end{definition}

\begin{lemma}[Lower confidence bound] \label{lem:lcb}
Suppose Assumptions~\ref{ass:subgaussian} and \ref{ass:prior} hold.
For a confidence level $\delta \in (0, 1)$, the acquisition function $Q_n(u; \delta)$ is a lower bound on $\objective_n(u, \thetatrue)$ on the domain $\U_n$ for all $n \in \NsmallerN$ with probability at least $1 - \delta$,
\begin{align*}
	\Prop\left[Q_n(u; \delta) \leq  \objective_n(u, \thetatrue), ~\forall u \in \U_n, \forall n \in \NsmallerN\right] \,\geq\, 1 - \delta.
\end{align*}
\end{lemma}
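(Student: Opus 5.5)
The plan is to reduce the statement to the event guaranteed by Theorem~\ref{thm:sim-conf-bound-data-dependent-constrained}, and then argue deterministically on that event. Define
\begin{align*}
E_\delta := \left\lbrace \Vert \thetatrue - \mu_n\Vert_{\Lambda_n} \leq \gamma_n(\delta), ~\forall n \in \NsmallerN \right\rbrace .
\end{align*}
Under Assumptions~\ref{ass:subgaussian} and \ref{ass:prior}, Theorem~\ref{thm:sim-conf-bound-data-dependent-constrained} gives $\Prop[E_\delta] \geq 1-\delta$. It then suffices to show that $E_\delta$ is contained in the event $\left\lbrace Q_n(u;\delta) \leq \objective_n(u,\thetatrue), ~\forall u \in \U_n, \forall n \in \NsmallerN\right\rbrace$. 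Monotonicity of probability will then yield the claim.

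For the inclusion, fix an outcome in $E_\delta$, an index $n \in \NsmallerN$, and an action $u \in \U_n$. I will verify that $\thetatrue$ is a feasible point of the inner problem \eqref{nlp:lcb-method-acquisition-function} defining $Q_n(u;\delta)$. The constraint $\thetatrue \in \thetaSet$ holds by Assumption~\ref{ass:prior}. The confidence constraint $\Vert \thetatrue - \mu_n\Vert_{\Lambda_n} \leq \gamma_n(\delta)$ holds because we are on $E_\delta$. Since $Q_n(u;\delta)$ is the minimum (or infimum) of $l_n(u, A_n(u)\theta)$ over this feasible set, evaluating at $\theta = \thetatrue$ gives
\begin{align*}
Q_n(u;\delta) \leq l_n(u, A_n(u)\thetatrue) = \objective_n(u,\thetatrue).
\end{align*}
The same argument applies for every $u$ and $n$, which establishes the inclusion. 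Note that this also shows the feasible set is nonempty on $E_\delta$, so $Q_n$ is well defined there with value in $[-\infty, \infty)$.

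There is no real obstacle here. The one point needing care is that the bound must hold \emph{uniformly} in $n$ on a single event; this is why I use the uniform-in-time statement of Theorem~\ref{thm:sim-conf-bound-data-dependent-constrained} rather than a union bound over $n$. Uniformity in $u$ comes for free, because the event $E_\delta$ does not depend on $u$. A minor technicality is whether the minimum in \eqref{nlp:lcb-method-acquisition-function} is attained. If it is not, I interpret $Q_n$ as an infimum, and the inequality still holds because an infimum is bounded above by the objective value at any feasible point.
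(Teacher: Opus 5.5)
Your proposal is correct and follows the paper's proof: on the event from Theorem~\ref{thm:sim-conf-bound-data-dependent-constrained}, together with $\thetatrue \in \thetaSet$ from Assumption~\ref{ass:prior}, $\thetatrue$ is feasible for \eqref{nlp:lcb-method-acquisition-function}, which gives $Q_n(u;\delta) \leq \objective_n(u,\thetatrue)$ for all $u$ and $n$ at once. Reading $Q_n$ as an infimum when the minimum is not attained is a small extra bit of care; the paper instead assumes a minimizer $\tilde\theta_n$ exists.
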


\begin{proof}
Consider $u \in \U_n$ and let $\tilde{\theta}_n$ be a solution to \eqref{nlp:lcb-method-acquisition-function} for the given action $u$.
If $\Vert \thetatrue- \mu_{n}\Vert_{\Lambda_{n}} \leq \gamma_{n}(\delta)$ and $\thetatrue \in \thetaSet$, then $\thetatrue$ is a feasible point of \eqref{nlp:lcb-method-acquisition-function}.
This implies
\begin{align*}
Q_n(u; \delta)
	= l_n(u, A_n(u)\tilde\theta_n)
	\leq l_n(u, A_n(u)\thetatrue)
	= \objective_n(u, \thetatrue).
\end{align*}
Theorem~\ref{thm:sim-conf-bound-data-dependent-constrained} together with Assumption~\ref{ass:prior} implies that $\Vert \thetatrue- \mu_{n}\Vert_{\Lambda_{n}} \leq \gamma_{n}(\delta)$ and $\thetatrue \in \thetaSet$ for all $n \in \NsmallerN$ holds with a probability of at least $1 - \delta$, which concludes the proof.
\end{proof}

An important property highlighting that the proposed method preserves known problem structures is the following:
\begin{proposition}
	If the objective function is bounded uniformly in $\theta \in \thetaSet$, then the acquisition function $Q_n(u; \delta)$ will inherit the same bounds:
	If
	\begin{align}
		\forall \theta \in \thetaSet, u \in \U_n: \quad \objective^{\min}_n(u) \le \objective_n(u, \theta) \le \objective^{\max}_n(u),
	\end{align}
	for some functions $\objective^{\min}_n(u)$ and $\objective^{\max}_n(u)$, then we also have:
	\begin{align}
		\forall u \in \U_n: \quad \objective^{\min}_n(u) \le Q_n(u; \delta) \le \objective^{\max}_n(u).
	\end{align}
\end{proposition}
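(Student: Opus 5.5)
The plan is to argue pointwise in $u$: fix $u \in \U_n$ and compare $Q_n(u;\delta)$ with $\objective_n(u,\theta)$ over the feasible set of \eqref{nlp:lcb-method-acquisition-function}. Denote this feasible set by
\begin{align*}
F_n := \left\lbrace \theta \in \thetaSet : \Vert \theta - \mu_n \Vert_{\Lambda_n} \leq \gamma_n(\delta)\right\rbrace .
\end{align*}
By definition, $Q_n(u;\delta) = \inf_{\theta \in F_n} \objective_n(u,\theta)$, since $l_n(u, A_n(u)\theta) = \objective_n(u,\theta)$. The key observation is that $F_n \subseteq \thetaSet$. Hence both hypothesized bounds hold for every feasible $\theta$, and the claim follows from elementary properties of the infimum.

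For the lower bound, note that every $\theta \in F_n$ lies in $\thetaSet$, so $\objective_n(u,\theta) \geq \objective^{\min}_n(u)$. Thus $\objective^{\min}_n(u)$ is a lower bound on the set $\lbrace \objective_n(u,\theta) : \theta \in F_n\rbrace$, and therefore it is at most its infimum, i.e., $\objective^{\min}_n(u) \leq Q_n(u;\delta)$. This direction does not use that $F_n$ is nonempty. If $F_n$ were empty, the infimum would be $+\infty$ and the inequality would still hold trivially.

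For the upper bound, I need a feasible point. I pick any $\bar\theta \in F_n$. Then $Q_n(u;\delta) \leq \objective_n(u,\bar\theta) \leq \objective^{\max}_n(u)$, where the last step uses $\bar\theta \in \thetaSet$.

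The only delicate step is the nonemptiness of $F_n$, which is needed for the upper bound (and for $Q_n$ to be finite). I would handle it as follows. If the constrained estimator \eqref{eq:lls} is used, then $\mu_n \in \thetaSet$ and $\gamma_n(\delta) \geq 0$, so $\mu_n \in F_n$ deterministically. If the unconstrained estimator is used, I would invoke the Feasibility proposition: $F_n$ is nonempty for all $n$ with probability at least $1-\delta$. In that case the statement should be read either on this event or under the convention that $Q_n$ is only defined when its problem is feasible. I would state this qualification explicitly in the proof.
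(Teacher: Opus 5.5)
Your proof is correct and takes essentially the same route as the paper, which argues in one line that $Q_n(u;\delta) = \objective_n(u,\tilde\theta(u;\delta))$ for some optimizer $\tilde\theta(u;\delta) \in \thetaSet$, so that both hypothesized bounds apply directly. Your version is slightly more careful. It works with the infimum over the feasible set $F_n \subseteq \thetaSet$ rather than presuming an attained minimizer, and it states the feasibility caveat explicitly: nonemptiness holds deterministically for the constrained estimator and only with high probability for the unconstrained one. The paper's one-line argument leaves that caveat implicit.
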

\begin{proof}
This directly follows from $Q_n(u; \delta) = \objective_n(u, \tilde{\theta}(u; \delta) )$ for some $\tilde{\theta}(u; \delta) \in \thetaSet$.
\end{proof}

Boundedness of the acquisition function does in general not hold in the black-box setting, where the acquisition function might tend to $-\infty$ as $\delta \rightarrow 1$.
This will be illustrated in Section~\ref{sec:experiments} where we compare the proposed method to the structure-agnostic LCB approach.

\begin{remark}[A different acquisition function]
In~\cite{Chowdhury2021}, an explicit lower bound of $Q_n(u; \delta)$ is used as their acquisition function, which is derived from a Lipschitz assumption on the loss function.
This approach allows a similar regret analysis as the one provided here, but does not exploit the full structure provided by the known loss function and the parameter confidence set.
In particular, their acquisition function does not preserve boundedness.
\end{remark}

\begin{remark}
We point out that the confidence set defined by $\Vert \theta - \mu_{n}\Vert_{\Lambda_{n}} \leq \gamma_{n}(\delta)$ is not necessarily a subset of the previous confidence set.
One might implement a variant of the proposed algorithm where multiple confidence sets are considered at each time step, i.e., the constraint \eqref{con:confidence-set} could be replaced by
$\Vert \theta - \mu_{i}\Vert_{\Lambda_{i}} \leq \gamma_{i}(\delta) \; \forall i \in \{0, \ldots, n\}$ in order to further reduce the search space.
As the parameter confidence bound given in Theorem~\ref{thm:sim-conf-bound-data-dependent-constrained} holds uniformly over the time steps, the regret analysis provided in the next section directly covers this variant of the proposed method.
If all previous confidence sets are considered and the problem is time-invariant, we obtain monotone acquisition functions $Q_n(u; \delta) \leq Q_{n+1}(u; \delta)$ for all $u \in \U$.
\end{remark}

\section{Regret Analysis}
\label{sec:regret}

In the following, we derive a regret bound for the proposed structure-exploiting optimistic method for sequential gray-box optimization.
The proof is based on established results from linear bandit theory.
In particular, we adapt results from \cite{Dani2008, Abbasi-Yadkori2011, Rusmevichientong2010, Lattimore2020}.
Crucially, we leverage an improvement over the classic results by \cite{Abbasi-Yadkori2011}, recently published in \cite{Simpson2026}, in order to obtain data-dependent parameter confidence sets.
It is important to keep in mind that the analysis is based on the assumption that we are able to compute a global minimizer of the acquisition function in each iteration.
Satisfying this assumption remains a major challenge in practice.

In order to derive a data-dependent high-probability bound on the cumulative regret, we make the following assumptions.

\begin{assumption}[Lipschitz-continuous loss] \label{ass:lipschitz-l}
Let $V$ be the weighting matrix in \eqref{eq:lls} and $\Z_n= \left\lbrace z \in \R^{n_z} \mid z = A_n(u)\theta, u \in \U_n, \theta \in \thetaSet \right\rbrace$.
We assume the loss functions $l_n(u, z)$ are Lipschitz-continuous in $z$ with respect to the norm $\Vert \cdot \Vert_V$ with shared Lipschitz constant $L_z$, i.e., for all $n \in \NsmallerN$, we have
\begin{align}
| l_n(u, z_1) - l_n(u, z_2) | \leq L_z \Vert z_1 - z_2 \Vert_V \text{ for } u \in \U_n, z_1, z_2 \in \Z_n.
\end{align}
\end{assumption}

\begin{assumption}[Bounded regret] \label{ass:bounded-regret}
We assume the instantaneous regret is bounded by a constant $c_r > 0$,
\begin{align}
\objective_n(u_n, \thetatrue) - \objective_n(\uopttrue_n, \thetatrue) \leq c_r, \forall u_n \in \U_n, \forall n \in \NsmallerN.
\end{align}
\end{assumption}

We point out that Assumption~\ref{ass:bounded-regret} is not necessarily needed for our regret analysis, as discussed in more detail in Remarks~\ref{remark:bounded-regret:1} and \ref{remark:bounded-regret:2}.
The assumption is nevertheless stated here to match the results provided in the literature for the linear bandit problem, which we discuss in Section~\ref{sec:linear-bandit}.

\begin{lemma}[A bound on the cumulative squared regret] \label{lem:cum-squared-regret}
Suppose Assumptions~\ref{ass:lipschitz-l} and \ref{ass:bounded-regret} hold.
Furthermore, assume $\thetatrue \in \thetaSet$ and $\Vert \thetatrue- \mu_{n}\Vert_{\Lambda_{n}} \leq \gamma_{n}(\delta)$ for all $n \in \NsmallerN$.
Then, the cumulative squared regret is bounded as follows:
\begin{align}\label{eq:lem:cum-squared-regret}
\sum_{n=0}^{N-1} r_n^2
&\leq
2 \max(c_r, 2 L_z \gamma_N(\delta))^2  \log(\det(\Lambda_0\inv \Lambda_N))
\end{align}
with constants $L_z$ and $c_r$ as defined in Assumptions \ref{ass:lipschitz-l} and \ref{ass:bounded-regret}, respectively.
\end{lemma}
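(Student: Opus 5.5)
The plan is to follow the standard optimistic (LCB) argument of \cite{Abbasi-Yadkori2011, Lattimore2020}: bound each instantaneous regret by a confidence width, then sum the squared widths with the elliptical potential lemma.

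\emph{Step 1: per-step bound.} Let $(u_n, \tilde\theta_n)$ solve $\problem_n(\delta)$. Since $\thetatrue$ lies in the confidence set and in $\thetaSet$, the pair $(\uopttrue_n, \thetatrue)$ is feasible for $\problem_n(\delta)$. Hence
\[
l_n(u_n, A_n(u_n)\tilde\theta_n) \le \objective_n(\uopttrue_n, \thetatrue),
\]
and therefore
\[
r_n \le l_n(u_n, A_n(u_n)\thetatrue) - l_n(u_n, A_n(u_n)\tilde\theta_n).
\]
Both model outputs lie in $\Z_n$, so Assumption~\ref{ass:lipschitz-l} gives
\[
r_n \le L_z \Vert A_n(u_n)(\thetatrue - \tilde\theta_n)\Vert_V .
\]
Write $M_n := V^{1/2}A_n(u_n)$. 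Inserting $\Lambda_n^{-1/2}\Lambda_n^{1/2}$ and using $\Vert \thetatrue - \tilde\theta_n\Vert_{\Lambda_n} \le 2\gamma_n(\delta)$ (triangle inequality through $\mu_n$), we obtain
\[
r_n \le 2L_z\gamma_n(\delta)\, w_n, \qquad w_n := \Vert M_n \Lambda_n^{-1/2}\Vert_2 .
\]
Since $\gamma_n$ is nondecreasing in $n$, because $\det\Lambda_n$ increases, we may replace $\gamma_n$ by $\gamma_N$. Combining this with Assumption~\ref{ass:bounded-regret} gives
\[
r_n \le \min(c_r,\, 2L_z\gamma_N w_n) \le \beta \min(1, w_n), \qquad \beta := \max(c_r, 2L_z\gamma_N),
\]
and hence $r_n^2 \le \beta^2 \min(1, w_n^2)$.

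\emph{Step 2: multi-output elliptical potential.} It remains to show
\[
\sum_n \min(1, w_n^2) \le 2\log\det(\Lambda_0^{-1}\Lambda_N).
\]
Use $\Lambda_{n+1} = \Lambda_n + M_n\T M_n$, so that
\[
\det\Lambda_{n+1} = \det\Lambda_n \,\det(\eye + B_n), \qquad B_n := \Lambda_n^{-1/2}M_n\T M_n\Lambda_n^{-1/2} \succeq 0 .
\]
The largest eigenvalue of $B_n$ is $w_n^2$, and every other eigenvalue is nonnegative, so $\log\det(\eye+B_n) \ge \log(1+w_n^2)$. The elementary inequality $\min(1,x) \le 2\log(1+x)$ for $x \ge 0$ then gives
\[
\min(1,w_n^2) \le 2\log\det(\eye+B_n).
\]
Telescoping over $n$ yields $2\log\det(\Lambda_0^{-1}\Lambda_N)$. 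Multiplying by $\beta^2$ produces exactly the claimed bound.

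The main obstacle is Step 2 in the multi-output setting. It requires recognizing that the relevant width is the spectral norm $\Vert M_n\Lambda_n^{-1/2}\Vert_2$ rather than a scalar quantity, and that only the top eigenvalue is needed in the determinant lower bound. A secondary check is that the Lipschitz step is valid, which needs $A_n(u_n)\tilde\theta_n \in \Z_n$; this holds because $\tilde\theta_n\in\thetaSet$ and $u_n \in \U_n$.
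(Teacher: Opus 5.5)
Your proof is correct and follows the paper's argument step for step: feasibility of $(\uopttrue_n,\thetatrue)$ for $\problem_n(\delta)$, the Lipschitz bound with width $\Vert V^{1/2}A_n(u_n)\Lambda_n^{-1/2}\Vert_2$, monotonicity of $\gamma_n$, clipping with $\min(1,\cdot)$ and $\log(1+x)\ge x/2$, and a top-eigenvalue determinant bound that telescopes. The only cosmetic difference is in the potential step. You work directly with the $\ntheta\times\ntheta$ matrix $B_n=\Lambda_n^{-1/2}M_n\T M_n\Lambda_n^{-1/2}$ via $\Lambda_{n+1}=\Lambda_n^{1/2}(\eye+B_n)\Lambda_n^{1/2}$. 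The paper instead normalizes by $\Lambda_0$ and passes to the $n_z\times n_z$ matrix $M_n S_n\inv M_n\T$ using the Weinstein--Aronszajn identity (Lemma~\ref{lem:potential}).
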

\begin{proof}
As $\thetatrue \in \thetaSet$ and $\Vert \thetatrue- \mu_{n}\Vert_{\Lambda_{n}} \leq \gamma_{n}(\delta)$ for all $n \in \NsmallerN$ by assumption, $(\uopttrue_n, \thetatrue)$ is a feasible point of the optimization problem $\problem_n(\delta)$.
With $(u_n, \tilde{\theta}_n)$ a solution to $\problem_n(\delta)$, we directly obtain the following bound on the instantaneous regret:
\begin{align}
0 \leq r_n \leq \objective_n(u_n, \thetatrue) -  \objective_n(u_n, \tilde\theta_n).
\end{align}
Together with Lipschitz continuity of the loss function in $z$, Assumption~\ref{ass:lipschitz-l}, we have
\begin{align}
r_n &\leq l_n(u_n, A_n(u_n)\thetatrue) - l_n(u_n, A_n(u_n)\tilde{\theta}_n) \\
&\leq  L_z \Vert A_n(u_n) \thetatrue - A_n(u_n) \tilde{\theta}_n \Vert_V \label{subeq:regret-bound} \\
&\leq  L_z \left\Vert \tilde{A}_n \right\Vert_2 \left\Vert \thetatrue - \tilde{\theta}_n \right\Vert_{\Lambda_{n}},
\end{align}
where we introduced $\tilde{A}_n := V^{\frac{1}{2}} A_n(u_n) \Lambda_{n}^{-\frac{1}{2}}$.
By assumption, $\Vert \thetatrue- \mu_{n}\Vert_{\Lambda_{n}} \leq \gamma_{n}(\delta)$ and $\Vert \tilde\theta_n- \mu_{n}\Vert_{\Lambda_{n}} \leq \gamma_{n}(\delta)$ such that
\begin{align}
\Vert \thetatrue - \tilde{\theta}_n \Vert_{\Lambda_{n}} \leq \Vert \thetatrue - \mu_{n} \Vert_{\Lambda_{n}} + \Vert \tilde{\theta}_n - \mu_{n}\Vert_{\Lambda_{n}} \leq 2 \gamma_{n}(\delta).
\end{align}
Note that the sequence $\gamma_n(\delta)$, $n \in \NsmallerN$, is nondecreasing, since $\det(A+B) \geq \det(A)$ for positive semidefinite matrices $A$ and $B$.
Together with the assumption that $r_n \leq c_r$, we directly obtain
\begin{align}
r_n &\leq \min(c_r, 2 L_z \gamma_N(\delta) \Vert \tilde{A}_n \Vert_2) \leq  \max(c_r, 2 L_z \gamma_N(\delta)) \min(1, \Vert \tilde{A}_n \Vert_2).
\end{align}
Now, considering the cumulative squared regret and using the inequality $\log(1 + x) \geq \tfrac{x}{2}$ for $x \in [0,1]$, we obtain
\begin{align} \label{eq:intermediate-3}
\sum_{n=0}^{N-1} r_n^2 \leq
\max(c_r, 2 L_z \gamma_N(\delta))^2 \sum_{n=0}^{N-1} 2\log(1 + \Vert \tilde{A}_n \Vert_2^2).
\end{align}
Note that $\Vert \tilde{A}_n \Vert_2^2 = \Vert \tilde{A}_n \tilde{A}_n\T \, \Vert_2$ and let $M_n = V^{\frac{1}{2}} A_n(u_n) \Lambda_0^{-\frac{1}{2}}$ and $S_n = \eye_{\ntheta} + \sum_{i=0}^{n-1} M_i\T M_i$.
With these definitions, we have $\tilde A_n \tilde{A}_n\T = M_n S_{n}\inv M_n\T$, which allows us to apply Lemma~\ref{lem:potential} to \eqref{eq:intermediate-3} yielding
\begin{align}
\sum_{n=0}^{N-1} r_n^2
&\leq
2 \max(c_r, 2 L_z \gamma_N(\delta))^2  \left(\log(\det(S_N) - \log(\det(S_0)))\right).
\end{align}
Noting that $\det(S_N) = \det(\Lambda_0\inv \Lambda_N)$ and $\log(\det(S_0)) = \log(\det(\eye_{\ntheta})) = 0$, we obtain the desired bound.
\end{proof}

\begin{remark}[Assumption~\ref{ass:bounded-regret} is not required]\label{remark:bounded-regret:1}
	The inequality derived in~\eqref{subeq:regret-bound} under the assumption that $\Vert \thetatrue- \mu_{n}\Vert_{\Lambda_{n}} \leq \gamma_{n}(\delta)$ combined with Assumption \ref{ass:prior} implies
	\begin{align}
		r_n &\le L_z \Vert A_n(u_n) \thetatrue - A_n(u_n) \tilde{\theta}_n \Vert_V \\
		&\le L_z  \Vert V^{\frac{1}{2}} A_n(u_n)\Lambda_0^{-\frac{1}{2}} \Vert_2 \Vert \thetatrue - \tilde{\theta}_n \Vert_{\Lambda_0} \\
		&\le 2L_z  c_\theta \Vert V^{\frac{1}{2}} A_n(u_n)\Lambda_0^{-\frac{1}{2}} \Vert_2.
	\end{align}
	Therefore, Assumption~\ref{ass:bounded-regret} is not needed for this analysis and the constant $c_r$ can simply be replaced with $2L_z c_\theta \Vert V^{\frac{1}{2}} A_n(u_n)\Lambda_0^{-\frac{1}{2}} \Vert_2 $ in inequality \eqref{eq:lem:cum-squared-regret}.
	However, a problem-specific bound $c_r$ derived based on knowledge of $l_n(u, z)$, $\U_n$ and $\thetaSet$ as in Assumption~\ref{ass:bounded-regret} might lead to a different, potentially tighter constant.
	Besides, Assumption~\ref{ass:bounded-regret} is commonly used in the linear bandit literature which allows for a straightforward comparison with their results.
\end{remark}

\begin{theorem}[Data-dependent regret bound] \label{thm:regret-data-dependent}
Suppose Assumptions~\ref{ass:subgaussian}, \ref{ass:prior}, \ref{ass:lipschitz-l} and \ref{ass:bounded-regret} hold and consider the confidence level $\delta \in (0, 1)$.
If $u_n$ is chosen as a solution to $\problem_n(\delta)$
for all $n \in \NsmallerN$, then the following statement holds with a probability of at least $1-\delta$:
\begin{align} \label{eq:R-bound-data-dependent}
R_N
&\leq
\max(c_r, 2 L_z \gamma_N(\delta))  \sqrt{2 N \log\left(\det\left(\Lambda_0\inv \Lambda_{N}\right)\right)},
\end{align}
where $L_z$ and $c_r$ are data-independent constants as defined in Assumption~\ref{ass:lipschitz-l} and \ref{ass:bounded-regret}, respectively.
\end{theorem}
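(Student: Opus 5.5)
The plan is to combine the high-probability event of Theorem~\ref{thm:sim-conf-bound-data-dependent-constrained} with the deterministic bound of Lemma~\ref{lem:cum-squared-regret}, and then convert the bound on the sum of squared regrets into a bound on $R_N$ via Cauchy--Schwarz.

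\textbf{Step 1: the good event.} Let $E$ be the event that $\Vert \thetatrue - \mu_n\Vert_{\Lambda_n} \leq \gamma_n(\delta)$ for all $n \in \NsmallerN$. Under Assumptions~\ref{ass:subgaussian} and \ref{ass:prior}, Theorem~\ref{thm:sim-conf-bound-data-dependent-constrained} gives $\Prop[E] \geq 1-\delta$. Assumption~\ref{ass:prior} gives $\thetatrue \in \thetaSet$ deterministically. Hence on $E$ every hypothesis of Lemma~\ref{lem:cum-squared-regret} holds. Its other two assumptions, \ref{ass:lipschitz-l} and \ref{ass:bounded-regret}, are assumed in the theorem. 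Feasibility of each $\problem_n(\delta)$ on $E$ is automatic, since $(\uopttrue_n,\thetatrue)$ is feasible; so the $u_n$ are well defined.

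\textbf{Step 2: deterministic bound on $E$.} On $E$, Lemma~\ref{lem:cum-squared-regret} yields
\begin{align*}
\sum_{n=0}^{N-1} r_n^2 \leq 2\max(c_r, 2L_z\gamma_N(\delta))^2 \log\left(\det\left(\Lambda_0\inv\Lambda_N\right)\right).
\end{align*}

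\textbf{Step 3: from squared to linear regret.} The proof of Lemma~\ref{lem:cum-squared-regret} shows $r_n \geq 0$ by optimality of $\uopttrue_n$. Cauchy--Schwarz then gives
\begin{align*}
R_N = \sum_{n=0}^{N-1} r_n \leq \sqrt{N \sum_{n=0}^{N-1} r_n^2}.
\end{align*}
Substituting the bound from Step~2 and pulling out the nonnegative factor $\max(c_r, 2L_z\gamma_N(\delta))$ gives \eqref{eq:R-bound-data-dependent} on $E$, and hence with probability at least $1-\delta$.

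There is no real obstacle; the work is already in Lemma~\ref{lem:cum-squared-regret} and Theorem~\ref{thm:sim-conf-bound-data-dependent-constrained}. The only points to check are that the event of Theorem~\ref{thm:sim-conf-bound-data-dependent-constrained} is uniform in $n$, which it is, so no union bound is needed. Also $\gamma_N(\delta)$ is data-dependent but appears only as a realized quantity on $E$, so it needs no separate control. Finally, $\log\det(\Lambda_0\inv\Lambda_N) \geq 0$, so the square root is well defined.
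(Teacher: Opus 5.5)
Your proposal is correct and follows the paper's proof: on the uniform event of Theorem~\ref{thm:sim-conf-bound-data-dependent-constrained} (which holds with probability at least $1-\delta$, with $\thetatrue \in \thetaSet$ from Assumption~\ref{ass:prior}), you apply Lemma~\ref{lem:cum-squared-regret} and then Cauchy--Schwarz, exactly as the paper does. Your remark that $r_n \geq 0$ is harmless but not needed, since $\sum_n r_n \leq \sqrt{N \sum_n r_n^2}$ holds regardless of sign.
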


\begin{proof}
We assume $\thetatrue \in \thetaSet$ and $\Vert \thetatrue- \mu_{n}\Vert_{\Lambda_{n}} \leq \gamma_{n}(\delta)$ for all $n \in \NsmallerN$.
Under this assumption, the Cauchy-Schwarz inequality together with Lemma~\ref{lem:cum-squared-regret} implies
\begin{align}
R_N = \sum_{n=0}^{N-1} r_n \leq \sqrt{N \sum_{n=0}^{N-1} r_n^2} \leq \max(c_r, 2 L_z \gamma_N(\delta)) \sqrt{2 N \log\left(\det\left(\Lambda_0\inv \Lambda_N\right)\right)}.
\end{align}

Theorem~\ref{thm:sim-conf-bound-data-dependent-constrained} implies that the assumption $\thetatrue \in \thetaSet$ and $\Vert \thetatrue- \mu_{n}\Vert_{\Lambda_{n}} \leq \gamma_{n}(\delta)$ for all $n \in \NsmallerN$ holds with a probability of at least $1-\delta$.
As a consequence, the above bound holds with the same probability, which concludes the proof.
\end{proof}

Note that the regret bound in Theorem~\ref{thm:regret-data-dependent} relies on the data-dependent values $\Lambda_N$ and $\gamma_N(\delta)$.
Bounding these two terms allows us to derive a data-independent bound.
To this end, we make the following additional assumption.

\begin{assumption}[Bounded regressors] \label{ass:bounded-regressors}
We assume that the weighted regressors are bounded, i.e., there is a constant $c_A$ such that
$\Vert V^{\frac{1}{2}} A_n(u) \Lambda_0^{-\frac{1}{2}} \Vert_{\mathrm{F}} \leq c_A$,
for all $u \in \U_n$, $n \in \NsmallerN$.
\end{assumption}

\begin{theorem}[Data-independent regret bound] \label{thm:regret-data-independent}
Suppose Assumptions~\ref{ass:subgaussian}, \ref{ass:prior}, \ref{ass:lipschitz-l}, \ref{ass:bounded-regret}, \ref{ass:bounded-regressors} hold and consider the confidence level $\delta \in (0, 1)$.

If $u_n$ is chosen as a solution to $\problem_n(\delta)$ for all $n \in \NsmallerN$, then the following statement holds with a probability of at least $1-\delta$:
\begin{align}\label{eq:thm:regret-data-independent}
R_N
&\leq
\max(c_r, 2 L_z \bar{\gamma}_N(\delta)) \sqrt{2 N \ntheta \log\left(1 + \frac{N c_A^2}{\ntheta}\right)},
\end{align}
where
\begin{align} \label{eq:gamma-independent-gamma}
\bar{\gamma}_N(\delta) = \sqrt{c_\theta^2 + c_v^2 \ntheta \log\left(1 + \frac{N c_A^2}{\ntheta}\right) + 2 c_v^2 \log\left(\frac{1}{\delta}\right)},
\end{align}
with $c_v$, $c_\theta$, $L_z$, $c_A$ and $c_r$ data-independent constants as defined in Assumption~\ref{ass:subgaussian}, \ref{ass:prior}, \ref{ass:lipschitz-l}, \ref{ass:bounded-regret} and \ref{ass:bounded-regressors}, respectively.
\end{theorem}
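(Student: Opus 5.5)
The plan is to reduce Theorem~\ref{thm:regret-data-independent} to Theorem~\ref{thm:regret-data-dependent}. On the same event of probability at least $1-\delta$, namely $\thetatrue \in \thetaSet$ and $\Vert \thetatrue - \mu_n \Vert_{\Lambda_n} \le \gamma_n(\delta)$ for all $n$, we already have the data-dependent bound \eqref{eq:R-bound-data-dependent}. Both data-dependent quantities in that bound, $\gamma_N(\delta)$ and $\log\det(\Lambda_0\inv\Lambda_N)$, are monotone increasing in the single quantity $\log\det(\Lambda_0\inv\Lambda_N)$. The right-hand side of \eqref{eq:R-bound-data-dependent} is therefore monotone in it as well, so it suffices to prove the deterministic inequality
\begin{align*}
\log\det(\Lambda_0\inv \Lambda_N) \le \ntheta \log\left(1 + \frac{N c_A^2}{\ntheta}\right).
\end{align*}
This inequality holds for every realization of the data under Assumption~\ref{ass:bounded-regressors}. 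Substituting it into \eqref{eq:data-dependent-gamma-constrained} gives $\gamma_N(\delta) \le \bar\gamma_N(\delta)$. Substituting it directly under the square root then gives \eqref{eq:thm:regret-data-independent}.

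To prove the determinant bound, I would write $\det(\Lambda_0\inv\Lambda_N) = \det(S_N)$, with $S_N = \eye_{\ntheta} + \sum_{i=0}^{N-1} M_i\T M_i$ and $M_i = V^{\frac12}A_i(u_i)\Lambda_0^{-\frac12}$ as in the proof of Lemma~\ref{lem:cum-squared-regret}. This is a similarity transform, $\Lambda_0^{-\frac12}\Lambda_N\Lambda_0^{-\frac12} = S_N$. Since $S_N$ is symmetric positive definite, the AM--GM inequality applied to its eigenvalues gives $\det(S_N) \le \left(\trace(S_N)/\ntheta\right)^{\ntheta}$. Next, $\trace(S_N) = \ntheta + \sum_{i} \trace(M_i\T M_i) = \ntheta + \sum_i \Vert M_i \Vert_{\mathrm F}^2 \le \ntheta + N c_A^2$ by Assumption~\ref{ass:bounded-regressors}. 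Taking logarithms yields the claimed inequality.

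I expect no serious obstacle. The one point needing care is the monotonicity step, that is, making sure that replacing both occurrences of $\log\det$ by the upper bound is legitimate. This holds because $x \mapsto \max\bigl(c_r, 2L_z\sqrt{c_\theta^2 + c_v^2 x + 2c_v^2\log(1/\delta)}\bigr)\sqrt{2Nx}$ is nondecreasing in $x \ge 0$. It also uses that $\log\det(\Lambda_0\inv\Lambda_N) \ge 0$, since $S_N \succeq \eye$. Finally, the bound is stated for the index $N$, while the data-dependent theorem uses $\gamma_N$ with $\Lambda_N$ built from $N$ terms, so the indices match.
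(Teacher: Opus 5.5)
Your proposal is correct and follows essentially the same route as the paper: it reduces the claim to Theorem~\ref{thm:regret-data-dependent} and bounds $\log\det(\Lambda_0^{-1}\Lambda_N) = \log\det(S_N)$ through the eigenvalue AM--GM (Jensen) step together with the trace and Frobenius-norm bound, which is exactly Lemma~\ref{lem:logdet}. Your explicit check that the data-dependent bound is nondecreasing in the log-determinant, which justifies substituting the upper bound in both places, is a detail the paper leaves implicit.
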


\begin{proof}
The only data-dependent term in \eqref{eq:data-dependent-gamma-constrained} and \eqref{eq:R-bound-data-dependent} in Theorem~\ref{thm:sim-conf-bound-data-dependent-constrained} and \ref{thm:regret-data-dependent}, respectively, is $\log\left(\det\left(\Lambda_0\inv \Lambda_N\right)\right)$.
First, note that the determinant of $\Lambda_0\inv \Lambda_N$ can be expressed as
\begin{align}
\det\left(\Lambda_0\inv \Lambda_N\right) = \det\left(\eye_{\ntheta} + \sum_{n=0}^{N-1} M_n\T M_n\right)
\end{align}
for $M_n = V^{\frac{1}{2}} A_n(u_n) \Lambda_0^{-\frac{1}{2}}$.
Applying Lemma~\ref{lem:logdet} then yields the desired data-independent bound.
\end{proof}

\begin{remark}[A data-independent regret bound without Assumption~\ref{ass:bounded-regret}]\label{remark:bounded-regret:2}
	As discussed in Remark~\ref{remark:bounded-regret:1}, the constant $c_r$ can be replaced with $2L_z  c_\theta \Vert V^{\frac{1}{2}} A_n(u_n)\Lambda_0^{-\frac{1}{2}} \Vert_2 $, which is in turn bounded by $2L_z c_\theta c_A$ under Assumption~\ref{ass:bounded-regressors}.
	Therefore, the data-independent regret bound \eqref{eq:thm:regret-data-independent} can be replaced by
	\begin{align}\label{eq:thm:regret-data-independent:bis}
		R_N \leq
		2 L_z  \max(c_\theta c_A, \bar{\gamma}_N(\delta)) \sqrt{2 N \ntheta \log\left(1 + \frac{N c_A^2}{\ntheta}\right)}.
	\end{align}
	Note that this bound does not require Assumption~\ref{ass:bounded-regret} anymore.
\end{remark}

The above theorem shows that the cumulative regret behaves as $\bigO\left(\sqrt{N} \log(N)\right)$ asymptotically.
In the special case of finite action sets, we can even achieve a regret that behaves as $\bigO\left(\log(N)^2\right)$.

\begin{theorem}[Regret bounds for finite action sets] \label{thm:regret-finite-action-set}
Suppose the action sets $\U_n$ are finite for all $n \in \NsmallerN$.
We define the gap $\Delta_n > 0$ as
\begin{align} \label{eq:gap}
\Delta_n = \min_{u \in \check{\U}_n}\objective_n(u, \thetatrue) - \objective_n(\uopttrue_n, \thetatrue) \quad \text{where} \quad \check{\U}_n = \left\lbrace u \in \U_n \mid \objective_n(u, \thetatrue) > \objective_n(\uopttrue_n, \thetatrue)\right\rbrace,
\end{align}
where we define $\Delta_n = +\infty$ whenever $\check{\U}_n$ is empty.
Furthermore, let $\bar \Delta_N = \min_{n \in \NsmallerN} \Delta_n$.

Suppose Assumptions~\ref{ass:subgaussian}, \ref{ass:prior}, \ref{ass:lipschitz-l} and \ref{ass:bounded-regret} hold and consider the confidence level $\delta \in (0, 1)$.
If $u_n$ is chosen as a solution to $\problem_n(\delta)$
for all $n \in \NsmallerN$, then the following data-dependent bound holds with a probability of at least $1-\delta$:
\begin{align}
R_N
&\leq
\frac{2 \max(c_r, 2 L_z \gamma_N(\delta))^2}{\bar \Delta_N}  \log(\det(\Lambda_0\inv \Lambda_N)),
\end{align}
where $L_z$, $c_r$ are data-independent constants as defined in Assumption~\ref{ass:lipschitz-l} and \ref{ass:bounded-regret}, respectively.

If, in addition, Assumption~\ref{ass:bounded-regressors} is satisfied, then the following data-independent bound holds with a probability of at least $1-\delta$:
\begin{align}
R_N
&\leq
\frac{2 \ntheta \max(c_r, 2 L_z \bar \gamma_N(\delta))^2}{\bar \Delta_N}  \log\left(1 + \frac{N c_A^2}{\ntheta}\right),
\end{align}
with $\bar \gamma_N(\delta)$ as in \eqref{eq:gamma-independent-gamma} and data-independent constant $c_A$ as defined in Assumption~\ref{ass:bounded-regressors}.
\end{theorem}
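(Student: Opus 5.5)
The plan is to combine Lemma~\ref{lem:cum-squared-regret} with the gap definition: each instantaneous regret is either zero or at least $\bar\Delta_N$. We will work on the event that $\thetatrue \in \thetaSet$ and $\Vert \thetatrue - \mu_n\Vert_{\Lambda_n} \le \gamma_n(\delta)$ for all $n \in \NsmallerN$. By Theorem~\ref{thm:sim-conf-bound-data-dependent-constrained} and Assumption~\ref{ass:prior}, this event has probability at least $1-\delta$. Hence it suffices to prove both bounds deterministically on this event.

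\emph{Step 1: regret versus squared regret.} Fix $n$. Since $u_n \in \U_n$ and $\U_n$ is finite, either $\objective_n(u_n,\thetatrue) = \objective_n(\uopttrue_n,\thetatrue)$, in which case $r_n = 0$, or $u_n \in \check{\U}_n$. In the latter case $r_n \ge \Delta_n \ge \bar\Delta_N > 0$. Finiteness matters here: the minimum over $\check{\U}_n$ is attained, so $\Delta_n > 0$. If $\check{\U}_n$ is empty for all $n$, then $R_N = 0$ and nothing needs proving, so assume $\bar\Delta_N < \infty$. In both cases $r_n \ge 0$ and $r_n \le r_n^2/\bar\Delta_N$. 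Summing over $n$ gives
\[
R_N \le \frac{1}{\bar\Delta_N}\sum_{n=0}^{N-1} r_n^2 .
\]

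\emph{Step 2: invoke the squared-regret lemma.} On the event above, the hypotheses of Lemma~\ref{lem:cum-squared-regret} hold, since Assumptions~\ref{ass:lipschitz-l} and~\ref{ass:bounded-regret} are assumed. The lemma gives $\sum_n r_n^2 \le 2\max(c_r, 2L_z\gamma_N(\delta))^2 \log\det(\Lambda_0\inv\Lambda_N)$. Substituting this into Step 1 yields the data-dependent bound.

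\emph{Step 3: data-independent version.} Under Assumption~\ref{ass:bounded-regressors}, we argue as in the proof of Theorem~\ref{thm:regret-data-independent}. Write $\det(\Lambda_0\inv\Lambda_N) = \det(\eye_{\ntheta} + \sum_n M_n\T M_n)$ and apply Lemma~\ref{lem:logdet} to get
\[
\log\det(\Lambda_0\inv\Lambda_N) \le \ntheta\log\!\left(1 + \frac{N c_A^2}{\ntheta}\right).
\]
The same inequality inserted into \eqref{eq:data-dependent-gamma-constrained} gives $\gamma_N(\delta) \le \bar\gamma_N(\delta)$. The bound is monotone in both quantities, so substituting them gives the stated data-independent bound.

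There is no real obstacle. The only subtle points are the edge cases in Step 1: an empty $\check{\U}_n$ gives $\Delta_n = \infty$, and such terms contribute $r_n = 0$, so they are harmless. One should also note explicitly that optimism ensures $r_n \ge 0$ always, which holds by the definition of $\uopttrue_n$.
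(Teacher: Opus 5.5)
Your proposal is correct and follows the paper's proof essentially step for step. The dichotomy $r_n = 0$ or $r_n \ge \bar\Delta_N$ gives $R_N \le \sum_n r_n^2/\bar\Delta_N$; Lemma~\ref{lem:cum-squared-regret} then yields the data-dependent bound and Lemma~\ref{lem:logdet} the data-independent one, and you also make explicit the edge case $\bar\Delta_N = +\infty$ and the step $\gamma_N(\delta) \le \bar\gamma_N(\delta)$, which the paper leaves implicit.
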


\begin{proof}
First, note that we have either $r_n = 0$ or $r_n \geq \bar \Delta_N$ for all $n \in \NsmallerN$.
The cumulative regret is thus bounded by
\begin{align}
R_N = \sum_{n=0}^{N-1} r_n \leq \sum_{n=0}^{N-1} \frac{r_n^2}{\bar \Delta_N}.
\end{align}
Applying Lemma~\ref{lem:cum-squared-regret} yields the desired data-dependent bound
\begin{align}
R_N \leq \frac{2 \max(c_r, 2 L_z \gamma_N(\delta))^2}{\bar \Delta_N}   \log(\det(\Lambda_0\inv \Lambda_N)).
\end{align}
As before, applying Lemma~\ref{lem:logdet} yields the corresponding data-independent bound.
\end{proof}

\begin{remark}[Polytopic action sets and concave objectives]
	The result from the previous theorem also applies in the case where the action set $\U_n$ is a polytope and the objective $\objective_n(u, \theta)$ is concave in $u$ for all $n \in \NsmallerN$ and all $\theta \in \thetaSet$.
	More precisely, the regret bound in Theorem~\ref{thm:regret-finite-action-set} holds with gaps $\Delta_n$ defined on the vertices of the polytopic set $\U_n$.
	The reason is that, as $u_n$ is the solution to a concave problem, one can choose it to be an extreme point of $\U_n$.
	When $\U_n$ is a polytope, the extreme points coincide with the vertices of the polytope, which are finite.
\end{remark}

From the high-probability bounds discussed so far, we can directly obtain a bound on the expected cumulative regret, as shown by the following corollary.

\begin{corollary}[A bound on the expected cumulative regret]
Suppose the assumptions of Theorem~\ref{thm:regret-data-dependent} or \ref{thm:regret-data-independent} hold and let $b_N(\delta)$ be the corresponding high-probability bound.
Then, the expected cumulative regret is bounded as follows:
\begin{align}
\expected{}{R_N} \leq b_N(\delta) + \delta N c_r.
\end{align}
If $N$ is known a priori, choosing $\delta = \tfrac{1}{N}$ yields an expected regret $\expected{}{R_N}$ of the order $\bigO\left(\sqrt{N} \log(N)\right)$.

Similarly, under the assumptions of Theorem~\ref{thm:regret-finite-action-set}, choosing $\delta  = \tfrac{1}{N}$ yields an expected regret $\expected{}{R_N}$ of the order $\bigO\left(\log(N)^2\right)$.
\end{corollary}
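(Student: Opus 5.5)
The plan is to split the expectation according to the high-probability event on which the earlier theorems hold. Let $E_\delta$ denote the event that $\thetatrue \in \thetaSet$ and $\Vert \thetatrue - \mu_n\Vert_{\Lambda_n} \leq \gamma_n(\delta)$ for all $n \in \NsmallerN$. By Theorem~\ref{thm:sim-conf-bound-data-dependent-constrained} and Assumption~\ref{ass:prior}, $\Prop[E_\delta] \geq 1-\delta$. On $E_\delta$, the proofs of Theorems~\ref{thm:regret-data-dependent} and \ref{thm:regret-data-independent} give $R_N \leq b_N(\delta)$ deterministically.

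In the data-independent case $b_N(\delta)$ is a constant. In the data-dependent case it is random, since it involves $\Lambda_N$ and $\gamma_N(\delta)$. There I would either state the bound with $\expected{}{b_N(\delta)}$ or replace $b_N(\delta)$ by its data-independent upper bound via Lemma~\ref{lem:logdet}; I expect the intended reading is that $b_N(\delta)$ is deterministic, or that its expectation is taken.

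On the complement $E_\delta^c$, Assumption~\ref{ass:bounded-regret} gives $r_n \leq c_r$ for every $n$, hence $R_N \leq N c_r$. Regret is also nonnegative by optimality of $\uopttrue_n$. Therefore
\begin{align*}
\expected{}{R_N} = \expected{}{R_N \mymathbb{1}_{E_\delta}} + \expected{}{R_N \mymathbb{1}_{E_\delta^c}} \leq b_N(\delta)\,\Prop[E_\delta] + N c_r\, \Prop[E_\delta^c] \leq b_N(\delta) + \delta N c_r.
\end{align*}

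For the rates, I would substitute $\delta = 1/N$. Then $\delta N c_r = c_r = \bigO(1)$ and $\log(1/\delta) = \log N$, so $\bar\gamma_N(1/N) = \bigO(\sqrt{\log N})$. This gives $b_N = \bigO(\sqrt{\log N}\cdot\sqrt{N\log N}) = \bigO(\sqrt N \log N)$. For finite action sets, Theorem~\ref{thm:regret-finite-action-set} gives $b_N = \bigO(\bar\gamma_N^2 \log N / \bar\Delta_N) = \bigO(\log(N)^2)$, treating $\bar\Delta_N$ as bounded away from zero.

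The only delicate point is measurability and randomness of the data-dependent bound $b_N(\delta)$, that is, how to interpret $b_N(\delta)$ inside the expectation. The rest is routine.
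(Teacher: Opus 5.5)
Your proposal is correct and follows the paper's argument. You split on the high-probability confidence event, use $R_N \le b_N(\delta)$ on it and $R_N \le N c_r$ (Assumption~\ref{ass:bounded-regret}) on its complement, then substitute $\delta = 1/N$ into the data-independent bounds to read off the rates. Your version is slightly more careful than the paper's in two places: bounding $\Prop[E_\delta] \le 1$ directly avoids the paper's intermediate $(1-\delta)\,b_N(\delta)$ weighting, which needs a small extra argument since only $\Prop[E_\delta] \ge 1-\delta$ is known; and your observation that the data-dependent $b_N(\delta)$ is random, so the bound should be read with $\mathbb{E}[b_N(\delta)]$, addresses a point the paper leaves implicit.
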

\begin{proof}
From Theorem~\ref{thm:regret-data-dependent} or \ref{thm:regret-data-independent}, we have $R_N \leq b_N(\delta)$ with probability at least $1-\delta$.
If $R_N > b_N(\delta)$, which occurs with probability at most $\delta$, we bound $R_N$ by the maximum cumulative regret $N c_r$.
Using conditional expectation, this directly implies
\begin{align}
\expected{}{R_N} \leq (1-\delta )b_N(\delta) + \delta N c_r
 \le b_N(\delta) + \delta N c_r.
\end{align}
\end{proof}

\section{The linear bandit problem}
\label{sec:linear-bandit}

In this section, we compare the regret bound derived in the previous section with the state of the art for the linear bandit problem, which is obtained as a special case of the gray-box optimization setup.

If the output is scalar, the loss function is equal to the output $z$ and if the knowledge about $\thetaSet$ is used neither within the estimation problem nor within the optimistic problem defining the next action, we recover the classic LCB method for the linear bandit problem \citep{Dani2008,Abbasi-Yadkori2011} or, more generally, the linear contextual bandit problem \citep{Li2010}:

\begin{example}[Linear bandits]
We obtain the linear contextual bandit problem by considering a single-output model $y = a(u)\T \theta + v$ and the time-invariant linear loss function $l(u, z) = z$.
More specifically, we obtain the linear bandit problem by additionally assuming a time-invariant model of the form $y = u\T \theta + v$.
\end{example}

We first summarize the assumptions typically made for the analysis of the linear bandit problem \citep{Abbasi-Yadkori2011,Lattimore2020}.
\begin{assumption} \label{ass:linear-bandit}
Consider the linear bandit problem, i.e. $y = u\T \theta + v$ and $l(u, z) = z$.
Suppose Assumption~\ref{ass:subgaussian} holds.
In addition, we make the following assumptions:
\begin{enumerate}
	\item[(i)] $V = 1$, $\Lambda_0 = \lambda_0 \eye_{\ntheta}$, $\lambda_0 > 0$, and $\mu_0 = 0$. %
	\item[(ii)] $\Vert \thetatrue \Vert_2 \leq c_S$. %
	\item[(iii)] $\Vert u \Vert_2 \leq c_L$ for all $u \in \U_n$, $n \in \NsmallerN$. %
	\item[(iv)] $u\T \thetatrue \in [-1, 1]$ for all $u \in \U_n$, $n \in \NsmallerN$.
\end{enumerate}
\end{assumption}

Note that (iv) directly implies that the regret is bounded by 2, as $(u - \uopttrue)\T \thetatrue \leq 2 = c_r$.

Theorems~\ref{thm:regret-data-dependent} and \ref{thm:regret-data-independent} imply the following bounds for the linear bandit problem under Assumption~\ref{ass:linear-bandit}.

\begin{corollary}[Regret bounds for linear bandits]
Suppose Assumption \ref{ass:linear-bandit} holds and consider the confidence level $\delta \in (0, 1)$.
If $u_n$ is chosen as a solution to $\problem_n(\delta)$ for all $n \in \NsmallerN$ with $\gamma_n(\delta)$ as in \eqref{eq:data-dependent-gamma-constrained}, then the following statements hold with a probability of at least $1-\delta$:
\begin{align}
\intertext{Data-dependent bound:}
R_N
&\leq
2 \max(1, \gamma_{N}(\delta)) \sqrt{2 N \log\left(\det\left(\Lambda_0\inv \Lambda_N\right)\right)}
\intertext{Data-independent bound:}
R_N
&\leq
2 \max(1, \bar\gamma_{N}(\delta))\sqrt{2 N d \log\left(1 + \frac{N c_L^2}{d \lambda_0}\right)}
\end{align}
where $\bar\gamma_{N}(\delta) = \sqrt{{c_S^2}{\lambda_0} + c_v^2 d \log\left(1 + \frac{N c_L^2}{d \lambda_0}\right) + 2 c_v^2 \log\left(\tfrac{1}{\delta}\right)}$.
\end{corollary}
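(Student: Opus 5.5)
The plan is to specialize Theorems~\ref{thm:regret-data-dependent} and \ref{thm:regret-data-independent} to the linear bandit setting by identifying each abstract constant. Here $n_z=1$, $A_n(u)=u\T$, and $l(u,z)=z$. Under Assumption~\ref{ass:linear-bandit}(i) we have $V=1$, so Assumption~\ref{ass:lipschitz-l} holds with $L_z=1$. By (iv), Assumption~\ref{ass:bounded-regret} holds with $c_r=2$. Since the knowledge of $\thetaSet$ is not used, I take $\thetaSet$ to be the ball $\{\theta : \Vert\theta\Vert_2\le c_S\}$. This set is convex, contains $\thetatrue$ by (ii), and contains $\mu_0=0$. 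Moreover $\Vert\theta-\mu_0\Vert_{\Lambda_0}=\sqrt{\lambda_0}\Vert\theta\Vert_2\le\sqrt{\lambda_0}\,c_S$, so Assumption~\ref{ass:prior} holds with $c_\theta^2=\lambda_0 c_S^2$.

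Two subtleties need care. First, the algorithm and estimator are run without the constraint $\theta\in\thetaSet$. I would argue that the regret analysis (Lemma~\ref{lem:cum-squared-regret}) never uses $\tilde\theta_n\in\thetaSet$; it only needs $\Vert\tilde\theta_n-\mu_n\Vert_{\Lambda_n}\le\gamma_n$ and the Lipschitz bound. Here $l$ is linear, so Lipschitz continuity holds globally and not only on $\Z_n$. For the unconstrained estimator, the confidence bound is Theorem~2 of \cite{Simpson2026}, which the paper notes coincides with Theorem~\ref{thm:sim-conf-bound-data-dependent-constrained}. Hence the event of that theorem still holds with probability at least $1-\delta$. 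Second, I need $c_r=2$ to apply even if $\thetaSet$ were enforced; this holds because $c_r$ bounds the true regret, which depends only on $\thetatrue$.

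With these identifications, the data-dependent bound of Theorem~\ref{thm:regret-data-dependent} reads
\[
\max(2,2\gamma_N)\sqrt{2N\log\det(\Lambda_0\inv\Lambda_N)}=2\max(1,\gamma_N)\sqrt{2N\log\det(\Lambda_0\inv\Lambda_N)}.
\]
For the data-independent bound I verify Assumption~\ref{ass:bounded-regressors}. We have $\Vert V^{1/2}u\T\Lambda_0^{-1/2}\Vert_{\mathrm F}=\Vert u\Vert_2/\sqrt{\lambda_0}\le c_L/\sqrt{\lambda_0}$ by (iii), so $c_A^2=c_L^2/\lambda_0$. Substituting this together with $c_\theta^2=\lambda_0c_S^2$ and $\ntheta=d$ into \eqref{eq:thm:regret-data-independent} and \eqref{eq:gamma-independent-gamma} gives the stated $\bar\gamma_N(\delta)$ and the factor $2\max(1,\bar\gamma_N)\sqrt{2Nd\log(1+Nc_L^2/(d\lambda_0))}$.

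The main obstacle is the first subtlety: justifying that dropping the $\thetaSet$ constraint in both the estimator and $\problem_n(\delta)$ does not break Lemma~\ref{lem:cum-squared-regret}. I would handle it by rereading that proof step by step and noting that only the confidence-set membership of $\thetatrue$ and $\tilde\theta_n$ is used, together with the global Lipschitz continuity of the linear loss.
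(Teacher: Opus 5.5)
Your proposal is correct and follows the paper's route: the paper obtains the corollary by instantiating Theorems~\ref{thm:regret-data-dependent} and \ref{thm:regret-data-independent} with $L_z = 1$, $c_r = 2$, $c_\theta^2 = \lambda_0 c_S^2$, $c_A^2 = c_L^2/\lambda_0$ and $\ntheta = d$, exactly as you do. Your extra check that dropping the constraint $\theta \in \thetaSet$ is harmless makes explicit a point the paper leaves implicit. That check rests on the global Lipschitz continuity of the linear loss and on the unconstrained confidence bound of \cite{Simpson2026}.
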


We point out that the data-dependent bound matches the bound reported in \cite{Abbasi-Yadkori2011} and \cite{Lattimore2020}.
They, however, use
\begin{align}
\gamma^\prime_N(\delta) = \sqrt{\lambda_0} c_S + c_v \sqrt{\log\left(\det\left(\Lambda_0\inv \Lambda_{N}\right)\right) + 2 \log\left(\frac{1}{\delta}\right)}
\end{align}
in their definition of the parameter confidence sets.
Analogously, the data-independent bound given in \cite{Lattimore2020} uses
\begin{align}
\bar\gamma^\prime(\delta)  = \sqrt{\lambda_0} c_S + c_v \sqrt{d \log\left(1 + N \frac{c_L^2}{\lambda_0 d}\right) + 2 \log\left(\frac{1}{\delta}\right)}.
\end{align}
We point out that $\gamma^\prime_N(\delta)$, $\bar\gamma^\prime(\delta)$ are strictly larger than $\gamma_n(\delta)$, $\bar\gamma_{N}(\delta)$.
Our regret analysis thus improves upon the state of the art for the linear stochastic bandit due to improved parameter confidence bounds.

\begin{remark}
We emphasize that the state-of-the-art regret analysis of the LCB approach for linear bandits requires the assumption $\Vert \thetatrue \Vert_2 \leq c_S$, i.e., a priori knowledge of a bounded set of admissible parameters $\thetaSet$ is assumed.
The standard LCB algorithm, however, does not leverage this assumption, while the proposed method does.
We illustrate the potential performance gain resulting from exploitation of the assumption $\theta \in \thetaSet$ for a linear bandit problem in Section~\ref{sec:example-linear-bandit}.
\end{remark}

\begin{remark}[Lower bound on the regret]
For some classes of action sets, \cite{Lattimore2020} have shown that the cumulative regret $R_N$ of any policy is lower bounded by $c \sqrt{N}$ for some constant $c$.
A similar result is given in \cite{Rusmevichientong2010}.
These results highlight the near-optimality of the proposed method.
\end{remark}

\section{Illustrative examples}
\label{sec:experiments}

In the following, we illustrate the behavior of the proposed method through several examples.
We begin by comparing the method with its structure-agnostic variant in two simple cases.
The first example shows how leveraging prior knowledge about the true parameter -- encoded in the set $\thetaSet$ -- can substantially improve performance in a linear bandit setting.
The second example highlights gains over the structure-agnostic approach by exploiting structure induced by a known loss function.
As a third, more applied example, we consider a steel recycling problem, demonstrating the effectiveness of the proposed algorithm in a time-varying setting.

\subsection{Exploiting structure given by the set of admissible parameters}
\label{sec:example-linear-bandit}

\begin{figure}
\centering
\includegraphics[width=0.9\linewidth]{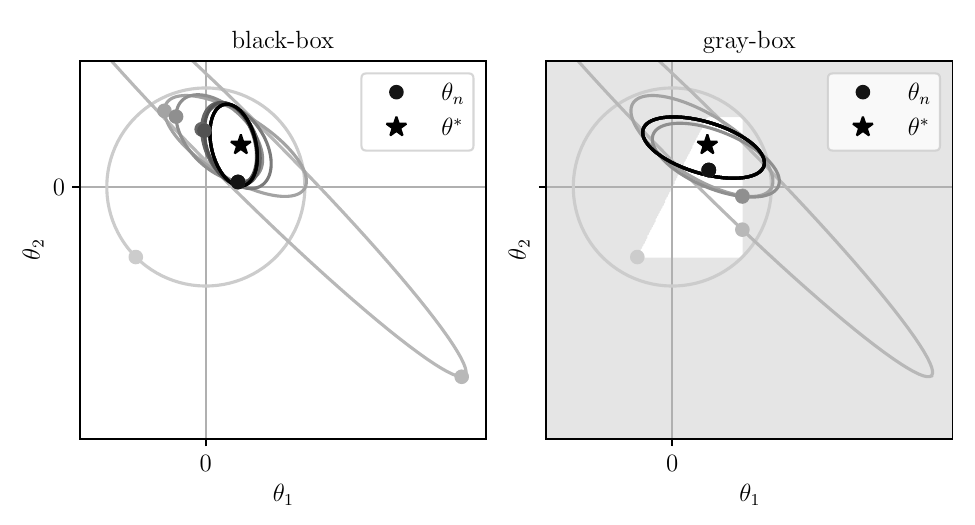}
\caption[short]{Linear bandit with additional structural knowledge of the true parameter:
Confidence ellipsoid as well as the parameter values $\theta_n$ picked by the optimizer at iteration $n$.
The shaded area indicates the infeasible region with respect to the constraint $\theta \in \thetaSet$.
}
\label{fig:bandit-constrained}
\end{figure}

We consider a linear bandit problem in two dimensions where we assume additional structural knowledge of the true parameter is given by the set of admissible parameters $\thetaSet$.
We compare the standard structure-agnostic LCB approach, which does not leverage the assumption $\theta \in \thetaSet$, to the proposed method.

Consider $f(u, \theta) = u\T \theta$, $\U = [0, 1]^2$ and $l(u, z) = z$.
The measurement noise is normally distributed with variance $\sigma_v^2 = 0.04$.
The estimator uses $\mu_0 = (0, 0)$, $\Lambda_0 = 0.5\eye_2$ and $V = \tfrac{1}{\sigma_v^2}$.
We assume the following set of admissible parameters is given
\begin{align}
\thetaSet = \left\lbrace \theta \in [-1, 1]^2 \,|\, \theta_2 - 2\theta_1 \leq 0 \right\rbrace.
\end{align}

We use the data-dependent values $\gamma_n(\delta)$ as defined in \eqref{eq:data-dependent-gamma-constrained} with $c_\theta = 1$, $c_v = 1$, $\delta = 0.05$ and set $\gamma_0 = c_\theta$.
Figure~\ref{fig:bandit-constrained} illustrates the confidence sets as well as the parameter values chosen by the optimizer for each time step.
For the structure-exploiting approach, the gray region indicates values of $\theta$ that are not in the admissible set $\thetaSet$.
Note that it suffices to get the correct sign of the parameter in order to not induce any regret.
The structure-agnostic approach shown on the left picks values in the upper left quadrant, which are prohibited by the additional constraint $\theta \in \thetaSet$ for the structure-exploiting method resulting in a smaller cumulative regret.

The superior performance of the structure-exploiting approach is underpinned by Figure~\ref{fig:bandit-constrained-regret} illustrating the regret distribution over 300 independent simulations where in each simulation a new value of the true parameter is sampled uniformly from $\thetaSet$.
The average cumulative regret is reduced from 1.9 for the black-box approach to 1.2 for the gray-box approach corresponding to a reduction of over 35\%.

\begin{figure}
\centering
\includegraphics[width=0.9\linewidth]{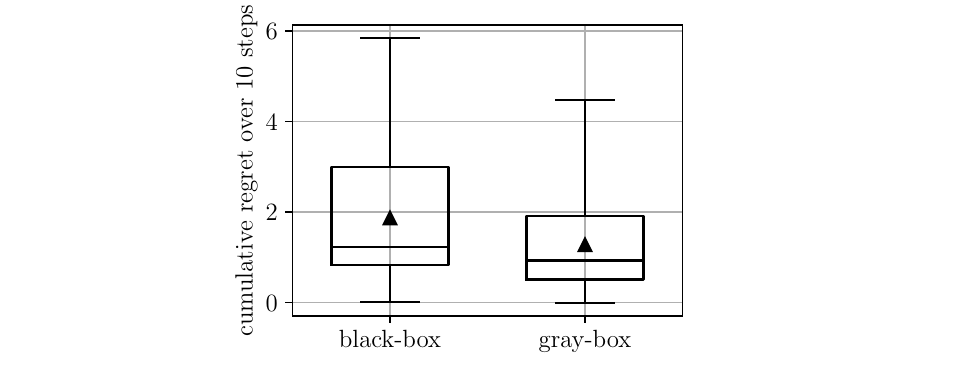}
\caption[short]{Linear bandit with additional structural knowledge of the true parameter:
Comparison of the cumulative regret obtained over 300 independent simulations where the true parameter is uniformly sampled from $\thetaSet$.
The whiskers indicate the minimum and maximum values, and the triangle indicates the mean.}
\label{fig:bandit-constrained-regret}
\end{figure}

\subsection{Exploiting structure given by the loss}

With the second example, we illustrate the performance gain obtained from exploiting known problem structures given by the loss function.
To this end, consider the following problem setup:

\begin{example} \label{ex:minimal}
Regard the parametric model $z = A(u)\theta \in \R^2$, where $\theta \in \R^4$ and
\begin{align} \label{eq:model-example}
A(u) = \begin{bmatrix}
 u & 1 & 0 & 0 \\
 0 & 0 & u & 1 \\
\end{bmatrix}\!.
\end{align}
The true parameter is $\thetatrue = (-.7, 0.3, -0.3, 0.4)$.
The loss function is $l(u, z) = z_1^2 + 0.1z_2^2$.
The scalar action $u$ is constrained to be in the set $\U = [-1, 1]$.
\end{example}

\begin{figure}
	\centering
	\includegraphics[width=0.9\linewidth]{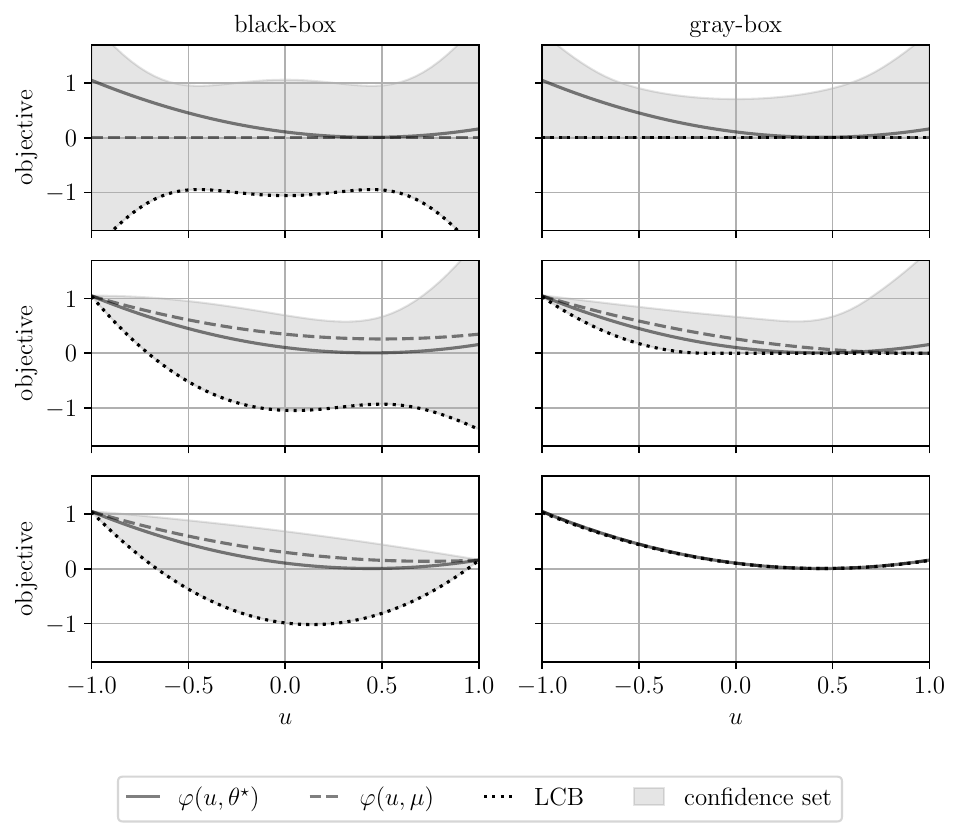}
	\caption{Comparison of the structure-agnostic and the structure-exploiting approach for Example~\ref{ex:minimal}.
	The top row shows the prior confidence set for the objective, while the middle row shows the confidence set after evaluating at $u_1 = -1$ and observing the model output $y_1 = (1., 0.7)$ for the proposed method and the objective value $y_1^{\mathrm{BB}} = 1.05$ for the structure-agnostic approach.
	The bottom row shows the confidence set after the additional evaluation at $u_2 = 1$ and observing $y_2 = (-0.4, 0.1)$ and $y_2^{\mathrm{BB}} = 0.16$, respectively.}
	\label{fig:samples-and-lcb}
\end{figure}

For the structure-exploiting gray-box approach, we use the parametric model $f(u, \theta)$ given in \eqref{eq:model-example} and assume the loss function $l$ is known.
In addition, we consider the black-box approach arising from $f_\mathrm{BB}(u, \theta_{\mathrm{BB}}) = b(u)\T \theta_{\mathrm{BB}}$ and $l_{\mathrm{BB}}(z) = z$ with
$b(u) = (u^2, u, 1) \in \R^3$ and $\theta_{\mathrm{BB}} \in \R^3$.
Note that there is $\theta^\star_{\mathrm{BB}}$ such that $f_\mathrm{BB}(u, \theta^\star_{\mathrm{BB}}) = \objective(u, \thetatrue)$, i.e., both approaches can recover the true objective.
In order to compare the structure-exploiting method to the structure-agnostic approach, we consider a simulation without measurement noise, i.e.,
we directly observe $y = f(u, \thetatrue)$ and $y^{\mathrm{BB}} = f_{\mathrm{BB}}(u, \thetatrue_\mathrm{BB}) = \objective(u, \thetatrue)$, respectively.

We use $\mu_0 = \mathbf{0}_4$, $\Lambda_0 = 10 \eye_4$, $V = 10^6 \eye_2$  and $\mu_0^{\mathrm{BB}} = \mathbf{0}_3$, $\Lambda_0^{\mathrm{BB}} = 10\eye_3$, $V^{\mathrm{BB}} = 10^6$, and $c_\theta = 1$, $c_v = 1$, $\delta = 0.05$.
For both approaches, we consider parameters that lead to an objective function with its integral bounded as follows:
\begin{align}
\thetaSet = \left\lbrace \theta \in \R^4:  \int_{-1}^1 l(f(u, \theta)) \mathrm{d}u \leq 1 \right\rbrace,~~
\thetaSet_{\mathrm{BB}} = \left\lbrace \theta_{\mathrm{BB}} \in \R^3:  \int_{-1}^1 l_{\mathrm{BB}}(f_{\mathrm{BB}}(u, \theta_\mathrm{BB})) \mathrm{d}u \leq 1 \right\rbrace.
\end{align}

On the one hand, $\mathrm{dim}(\theta) > \mathrm{dim}(\theta_{\mathrm{BB}})$, i.e., the gray-box approach needs to identify more unknown parameters.
On the other hand, the gray-box approach observes a two-dimensional measurement at each time step, while in the black-box setup only a scalar observation of the objective function is available.

Figure~\ref{fig:samples-and-lcb} illustrates the confidence sets for the unknown objective for both the gray-box and the black-box approach.
The confidence sets for the objective are bounded below by zero in the gray-box setting, highlighting the fact that the structure induced by $l(z)$ is preserved.

As the first action, we pick $u_1 = -1$, which is a minimizer of the acquisition functions for both methods.
The middle row shows the updated confidence sets and acquisition functions after evaluation at $u_1 =-1$ and observing the model output $y_1 = (1., 0.7)$ for the proposed method and the objective value $y_1^{\mathrm{BB}} = 1.05$ for the black-box method.
We pick $u_2 = 1$ as the next evaluation point, which is a minimizer of the acquisition function in both approaches.
After receiving the second output measurement $y_2 = (-0.4, 0.1)$, the gray-box approach has identified the correct model with the confidence set for the output, including only the true objective.
Thus, the next iteration would deliver the correct solution $\uopttrue$.
The black-box approach would need one more iteration to identify the correct parameters and an additional one to find $\uopttrue$.

\subsection{Application example}
\label{sec:example-steel}

\begin{figure}
	\centering
	\includegraphics[width=0.9\linewidth]{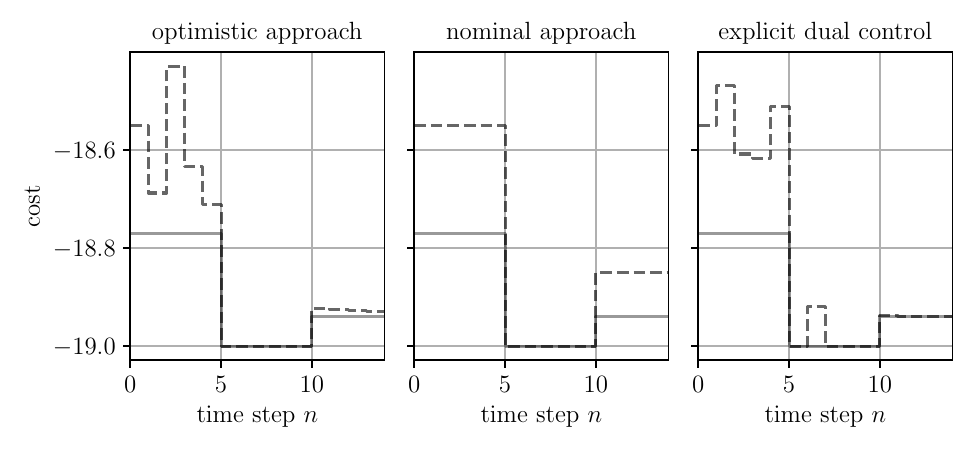}
	\caption{Cost incurred by the proposed optimistic method in comparison to the nominal and explicit dual approach for the steel recycling example.}
	\label{fig:cost-steel}
\end{figure}

\begin{figure}
	\centering
	\includegraphics[width=0.9\linewidth]{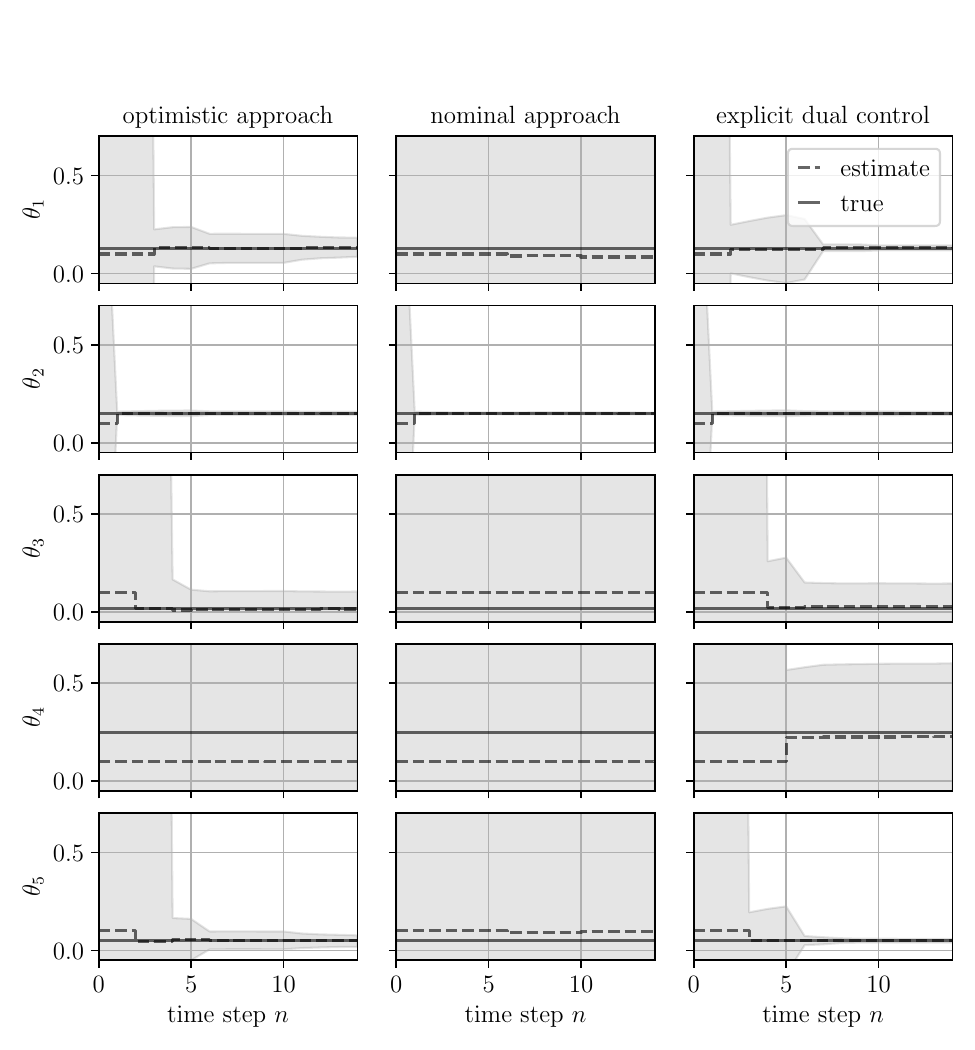}
	\caption{Concentration estimates together with the confidence bounds for the steel recycling example.}
	\label{fig:estimation-steel}
\end{figure}

As a final example, we apply the proposed method to a variant of the steel recycling problem introduced in \cite{Ghezzi2023}:
We consider a scrapyard where the scrap is divided into heaps.
The aim is to produce steel with a low concentration of the pollutant copper while minimizing the price of the scrap mix.
The pollutant concentration in each heap is not known a priori but needs to be estimated based on measurements of the pollutant concentration in the molten scrap mix.
The steel recycling problem is formalized as follows:
Let $\thetatrue \in \thetaSet$ denote the true concentration of the pollutant in each scrap heap, i.e., $\ntheta = \dim(\theta)$ corresponds to the number of heaps.
At each iteration $n$ and given the prices $c_n \in \R_+^{\ntheta}$ of the material in each heap, we choose actions $u \in \U = \left\lbrace u \in [0, 1]^{\ntheta} \, | \, \mathbf{1}\T u = 1 \right\rbrace$ corresponding to the amount of scrap picked from each heap.
The output is given by $f(u, \thetatrue) = u\T \thetatrue$ and corresponds to the concentration of pollutant in the scrap mix.
The cost function is
\begin{align}
l_n(u, z) = c_n\T \, u - r(z) ~ \text{ with } r(z) = r_{\max} - r_{\mathrm{slope}}\max(0, z - z_{\mathrm{target}}),
\end{align}
where the first term represents the cost of the selected scrap mix, while the second term corresponds to the revenue generated from selling the steel.
The revenue is equal to $r_{\max}$ when the pollutant concentration of the steel is below the target level $z_{\mathrm{target}} = 0.12$.
If the pollutant concentration exceeds this threshold, the revenue decreases linearly with a slope of $r_{\mathrm{slope}} = 15$.
We implement the nondifferentiable cost with a reformulation using an additional slack variable.

We use the admissible parameter set $\thetaSet = [0, 1]^{\ntheta}$ which encodes the fact that the true parameter vector corresponds to a vector of concentrations which need to take values in $[0, 1]$.
The estimator uses $\mu_0 = 0.1 \cdot \mathbf{1}_{\ntheta}$, $\Lambda_0 = \eye_{\ntheta}$ and $V = \frac{1}{\sigma_v^2}$ where $\sigma_v^2 = 10^{-6}$ is the variance of the Gaussian measurement noise such that we can use $c_v = 1$.

We compare the proposed optimistic gray-box approach with a nominal certainty-equivalent approach and an explicit dual approach.
The nominal approach selects $u_n$ as a solution to
\begin{mini}|s|
	{\scriptstyle{u \in \U}}
	{l_n\left(u, f(u, \mu_{n})\right).}
	{}
	{}
\end{mini}
The explicit dual control approach selects $u_n$ as a solution to
\begin{mini}|s|
	{\scriptstyle{u \in \U}}
	{l_n\left(u, f(u, \mu_{n})\right) + \beta \trace\left(\Lambda_{\mathrm{next}}(\Lambda_{n}, u_n)\inv\right),}
	{}
	{}
\end{mini}
with $\Lambda_{\mathrm{next}}(\Lambda, u) = \Lambda + A(u)\T V A(u)$ and weighting parameter $\beta$.
The additional trace term explicitly encourages exploration.

We consider a simulation over 15 time steps where the prices $c_n$ change every 5 time steps, rendering the problem time-variant.
The true pollutant concentrations are $\thetatrue = (0.13, 0.15, 0.02, 0.25, 0.05)$.
For the first five time steps, the prices are $c_n = (2, 1, 2, 3.5, 2)$.
Then, the prices are given by $c_n = (1, 1.5, 2, 3.5, 1)$ for the next five time steps.
For the final five time steps, the prices are $c_n = (2.2, 0.7, 3.2, 3.5, 1.9)$.
Importantly, note that material from the fourth heap is the most expensive throughout the simulation.

The performance of the three approaches is shown in Figure~\ref{fig:cost-steel}.
The optimistic approach yields a cumulative regret of 0.90,
while the nominal and explicit dual control approaches result in a cumulative regret of 1.55 and 1.18, respectively.
In addition, Figure~\ref{fig:estimation-steel} illustrates the estimated concentrations and the corresponding confidence bounds.
The nominal approach only exploits and never explores resulting in large confidence sets and overall suboptimal performance.
Comparing the proposed optimistic approach with the explicit dual control approach, we point out that the optimistic approach never picks scrap from the fourth -- very expensive -- heap yielding high uncertainty for $\theta_4$, while the explicit dual control approach reduces uncertainty for all the parameters.
This difference in behavior demonstrates that the optimistic approach implicitly restricts exploration to directions that promise improvement with respect to the objective.
In contrast, the explicit dual control approach yields undirected exploration, aiming to reduce uncertainty regardless of its impact on the objective.

\section{Conclusions and Outlook}
\label{sec:conclusion}

This paper considers sequential gray-box optimization where, at each time step, an objective function given in terms of the composition of a loss function and a parametric model is to be minimized.
While both the loss function and the parametric model are known, the parameters are unknown and need to be estimated from noisy measurements of the multivariate model outputs.
The gray-box optimization problem generalizes the fully black-box setting and thus covers the linear bandit and linear contextual bandit problems as special cases.

The proposed algorithm leverages the principle of \textit{optimism in the face of uncertainty} to trade off exploration and exploitation in order to ensure informative measurements are obtained, while keeping the incurred loss small.
In addition to the structure provided by the known loss function, the proposed method leverages additional prior knowledge on the parameter given in terms of a set of admissible parameters $\thetaSet$.
An in-depth regret analysis is provided, which extends established results from linear bandit theory and, in particular, improves upon the state-of-the-art regret bound for the (contextual) linear bandit due to the use of an improved data-dependent finite sample bound for multi-output linear least-squares estimation.
Numerical examples illustrate the effectiveness of the structure-exploiting approach compared to structure-agnostic methods.

Future work will address an extension to sequential gray-box optimization problems with joint input-output constraints, where additional mechanisms are required to ensure safe exploration.

\acks{%
This research was supported by DFG via projects 504452366 (SPP 2364), 560056112 (robust MPC), 535860958 (ALeSCo) and 525018088 (MAWERO), and by BMWK via 03EN3054B.
}

\appendix

\section{Additional Lemmas}

In the following, we prove two additional lemmas required for the proofs of Theorems~\ref{thm:regret-data-dependent} and \ref{thm:regret-data-independent}.

\begin{lemma} \label{lem:potential}
Consider $M_0, \ldots, M_{N-1} \in \R^{k \times d}$ and let $S_n = \eye_d + \sum_{i=0}^{n-1} M_i\T M_i$ for $n \in \NsmallerN$.
We then have
\begin{align}
\log\left(1 + \rho\left(M_n S_{n}\inv M_n\T\right)\right) \leq \log(\det(S_{n+1})) - \log(\det(S_{n})).
\end{align}
\end{lemma}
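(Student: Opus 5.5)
The plan is to use the matrix determinant lemma to write the increment of $\log\det S_n$ exactly, and then lower bound it by its largest eigenvalue term. Since $S_{n+1} = S_n + M_n\T M_n$ and $S_n \succeq \eye_d \succ 0$, I will factor
\begin{align*}
S_{n+1} = S_n^{\frac{1}{2}}\left(\eye_d + S_n^{-\frac{1}{2}} M_n\T M_n S_n^{-\frac{1}{2}}\right) S_n^{\frac{1}{2}},
\end{align*}
which gives $\det(S_{n+1}) = \det(S_n)\det(\eye_d + B_n\T B_n)$ with $B_n := M_n S_n^{-\frac{1}{2}} \in \R^{k\times d}$. By Sylvester's determinant identity, $\det(\eye_d + B_n\T B_n) = \det(\eye_k + B_n B_n\T)$, and $B_n B_n\T = M_n S_n\inv M_n\T$. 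This yields the exact identity
\begin{align*}
\log(\det(S_{n+1})) - \log(\det(S_n)) = \log\det\left(\eye_k + M_n S_n\inv M_n\T\right).
\end{align*}

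The matrix $P_n := M_n S_n\inv M_n\T$ is symmetric positive semidefinite. Denote its eigenvalues by $\lambda_1 \geq \dots \geq \lambda_k \geq 0$. Then
\begin{align*}
\log\det(\eye_k + P_n) = \sum_{j=1}^k \log(1+\lambda_j) \geq \log(1+\lambda_1) = \log\left(1+\rho(P_n)\right),
\end{align*}
because every term in the sum is nonnegative. For a PSD matrix the spectral radius equals the largest eigenvalue, so $\lambda_1 = \rho(P_n)$. This gives the claimed inequality.

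There is no real obstacle here. The only points needing care are that $S_n$ is invertible, so the square-root factorization is legitimate, and that the Sylvester identity is applied in the rectangular case $k \neq d$. The multi-output feature enters only through discarding the remaining eigenvalues in the last step, and that is where the bound can be loose. The same computation holds for $n = N-1$, with $S_N$ defined by the same formula, which is what the proof of Lemma~\ref{lem:cum-squared-regret} uses when it telescopes the sum.
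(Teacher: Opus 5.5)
Your proof is correct and takes essentially the paper's route: both rewrite the log-det increment as $\log\det(\eye_k + M_n S_n\inv M_n\T)$ via the Sylvester (Weinstein--Aronszajn) identity and then keep only the largest-eigenvalue term. The only cosmetic difference is that you factor symmetrically through $S_n^{\frac{1}{2}}$, whereas the paper applies the identity to the non-symmetric product $M_n\T M_n S_n\inv$ and writes $\det(S_n + M_n\T M_n)\det(S_n\inv)$.
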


\begin{proof}
First note that for any positive semidefinite matrix $B \in \R^{k\times k}$ with eigenvalues $\lambda_1, \ldots, \lambda_k \geq 0$, we have
\begin{align}
\log\left(1 + \rho(B)\right) = \log\left(1 + \max_j \lambda_j\right) \leq \sum_{j=1}
^k \log\left(1+\lambda_j\right) = \log\left(\det\left(\eye_k + B\right)\right).
\end{align}
We therefore have
\begin{align}
\log\left(1 + \rho\left(M_n S_{n}\inv M_n\T\right)\right)
& \leq \log(\det(\eye_k + M_n S_{n}\inv M_n\T)) \\
& = \log\left(\det\left(\eye_d + M_n\T M_n S_{n}\inv \right)\right) \\
& = \log\left(\det\left(S_{n} + M_n\T M_n\right)\det\left( S_{n}\inv \right)\right) \\
& = \log\left(\det\left(S_{n+1}\right)\right) - \log\left(\det\left( S_{n} \right)\right),
\end{align}
where the first equality follows from the Weinstein-Aronszajn identity providing $\det(\eye_k + AB) = \det(\eye_d + BA)$, $A \in \R^{k \times d}$, $B \in \R^{d \times k}$.
\end{proof}

\begin{lemma} \label{lem:logdet}
Let $M_0, \ldots, M_{N-1} \in \R^{k \times d}$ be matrices such that $\Vert M_n \Vert_{\mathrm{F}} \leq c$ for all $n \in \NsmallerN$.
Define $S_n = \eye_d + \sum_{i=0}^{n-1} M_i\T M_i$.
We then have
\begin{align}
\log\left( \det\left( S_n\right)\right) \leq d \log\left(1 + \frac{n c^2}{d} \right).
\end{align}
\end{lemma}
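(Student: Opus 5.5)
The plan is to combine the standard determinant--trace inequality with the AM--GM inequality applied to the eigenvalues of $S_n$. Since $S_n = \eye_d + \sum_{i=0}^{n-1} M_i\T M_i$ is symmetric positive definite, it has eigenvalues $\lambda_1, \ldots, \lambda_d > 0$. This gives
\begin{align*}
\det(S_n) = \prod_{j=1}^d \lambda_j \leq \left(\frac{1}{d}\sum_{j=1}^d \lambda_j\right)^{d} = \left(\frac{\trace(S_n)}{d}\right)^{d}.
\end{align*}
Taking logarithms, which is valid because both sides are positive, reduces the claim to bounding $\trace(S_n)$ from above.

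Next I compute the trace by linearity:
\begin{align*}
\trace(S_n) = d + \sum_{i=0}^{n-1} \trace(M_i\T M_i) = d + \sum_{i=0}^{n-1} \Vert M_i \Vert_{\mathrm{F}}^2.
\end{align*}
By the assumption $\Vert M_i \Vert_{\mathrm{F}} \leq c$, this is at most $d + n c^2$.

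Combining the two steps gives
\begin{align*}
\log(\det(S_n)) \leq d \log\left(\frac{d + n c^2}{d}\right) = d\log\left(1 + \frac{n c^2}{d}\right),
\end{align*}
using that the logarithm is monotone.

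There is no real obstacle in this argument. The only point needing care is the AM--GM step, which requires the eigenvalues to be nonnegative, and this holds because $S_n \succeq \eye_d$. The case $n=0$ is trivial, since $S_0 = \eye_d$ and both sides are zero.
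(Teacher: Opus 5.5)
Your proof is correct and follows the paper's argument. The paper applies Jensen's inequality to the concave logarithm over the eigenvalues of $S_n$, which is exactly your AM--GM step $\det(S_n) \le (\trace(S_n)/d)^d$. It then bounds the trace using $\trace(M_i\T M_i) = \Vert M_i \Vert_{\mathrm{F}}^2 \le c^2$, just as you do.
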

\begin{proof}
First note that for any positive definite matrix $B \in \R^{d \times d}$ with positive eigenvalues $\lambda_1, \ldots, \lambda_d > 0$,
Jensen's inequality implies
\begin{align}
\log\left(\det\left(B\right)\right) = \sum_{j=1}^d \log\left(\lambda_j\right) \leq d \log\left(\tfrac{1}{d} \sum_{j=1}^d \lambda_j\right) = d \log\left(\tfrac{1}{d} \trace\left(B\right)\right).
\end{align}
Applying this result to $B = S_n$, we obtain
\begin{align}
\log\left(\det\left(S_n\right)\right)
& \leq d \log\left(\tfrac{1}{d} \trace\left(\eye_d + \sum_{i=0}^{n-1} M_i\T M_i \right)\right) \\
& =  d \log\left(1 + \tfrac{1}{d} \trace\left(\sum_{i=0}^{n-1} M_i\T M_i \right)\right) \\
& \leq  d \log\left(1 + \frac{n c^2}{d} \right),
\end{align}
where the last inequality follows from the assumption $\Vert M_n \Vert_{\mathrm{F}} \leq c$.
\end{proof}

\bibliography{extracted.bib}

\end{document}